\newtheorem{theorem}{Theorem}[section]
\newtheorem{lemma}[theorem]{Lemma}
\newtheorem{proposition}[theorem]{Proposition}
\newtheorem{corollary}[theorem]{Corollary}
\newtheorem{example}[theorem]{Example}
\numberwithin{equation}{section}
\newcommand{\modulo}[3]{#1\equiv#2\textrm{ }(\textrm{mod }#3)}
\DeclarePairedDelimiter\langr{\langle}{\rangle}
\newcommand{\sm}[4]{\left(\begin{smallmatrix}#1&#2\\ #3&#4 \end{smallmatrix}
\right)}
\newcommand{\zNz}{\Z\slash N\Z}
\newcommand{\calH}{\mathcal{H}}
\newcommand{\calO}{\mathcal{O}}
\newcommand{\SL}{{\text {\rm SL}}}
\newcommand{\G}{\Gamma}
\newcommand{\tildeG}{\widetilde{\Gamma}}
\newcommand{\Q}{\mathbb{Q}}
\newcommand{\Z}{\mathbb{Z}}
\newcommand{\N}{\mathbb{N}}
\newcommand{\C}{\mathbb{C}}
\renewcommand{\tocsection}[3]{%
	\indentlabel{\@ifnotempty{#2}{\bfseries\ignorespaces#1 #2\quad}}\bfseries#3}
\renewcommand{\tocsubsection}[3]{%
	\indentlabel{\@ifnotempty{#2}{\ignorespaces#1 #2\quad}}#3}
\newcommand\@dotsep{4.5}
\def\@tocline#1#2#3#4#5#6#7{\relax
	\ifnum #1>\c@tocdepth % then omit
	\else
	\par \addpenalty\@secpenalty\addvspace{#2}%
	\begingroup \hyphenpenalty\@M
	\@ifempty{#4}{%
		\@tempdima\csname r@tocindent\number#1\endcsname\relax
	}{%
		\@tempdima#4\relax
	}%
	\parindent\z@ \leftskip#3\relax \advance\leftskip\@tempdima\relax
	\rightskip\@pnumwidth plus1em \parfillskip-\@pnumwidth
	#5\leavevmode\hskip-\@tempdima{#6}\nobreak
	\leaders\hbox{$\m@th\mkern \@dotsep mu\hbox{.}\mkern \@dotsep mu$}\hfill
	\nobreak
	\hbox to\@pnumwidth{\@tocpagenum{\ifnum#1=1\bfseries\fi#7}}\par% <-- \bfseries for \section page
	\nobreak
	\endgroup
	\fi}
\renewcommand\csname r@tocindent0\endcsname{0pt}
\def\l@subsection{\@tocline{2}{0pt}{2.5pc}{5pc}{}}
\def\l@subsubsection{\@tocline{2}{0pt}{4.5pc}{5pc}{}}
\def\subsubsection{\@startsection{subsubsection}{3}%
  \z@{.5\linespacing\@plus.7\linespacing}{-.5em}%
  {\normalfont\bfseries}}
\subjclass[2020]{Primary 94A60, 11T71, Secondary 11G05}
\keywords{elliptic curves, isogenies, radical isogenies, modular curves, post-quantum cryptography}
\author{\tiny{Valentina Pribani\'{c}}}
\address{\tiny{Department of Mathematics, University of Zagreb\\
	 Bijeni\v{c}ka cesta 30, 10000 Zagreb, Croatia}}
\email{valentina.pribanic@gmail.com}
\title[Radical isogenies and modular curves]{Radical isogenies and modular curves}
\begin{document}
\begin{abstract}
This article explores the connection between radical isogenies and modular curves. Radical isogenies are formulas designed for the computation of chains of isogenies of fixed small degree $N$, introduced by Castryck, Decru, and Vercauteren at Asiacrypt 2020. One significant advantage of radical isogeny formulas over other formulas with a similar purpose is that they eliminate the need to generate a point of order $N$ that generates the kernel of the isogeny. While radical isogeny formulas were originally developed using elliptic curves in Tate normal form, Onuki and Moriya have proposed radical isogeny formulas of degrees $3$ and $4$ on Montgomery curves and attempted to obtain a simpler form of radical isogenies using enhanced elliptic and modular curves. In this article, we translate the original setup of radical isogenies in Tate normal form into the language of modular curves. Additionally, we solve an open problem introduced by Onuki and Moriya regarding radical isogeny formulas on $X_0(N).$
\end{abstract}

\maketitle
\tableofcontents

%%%---SECTION---%%%
\section{Introduction}
Post-quantum cryptography (PQC) is an area of cryptography focused on developing cryptosystems that can resist attacks from both classical and quantum computers. These systems rely on hard mathematical problems that differ from the integer factorization problem or (elliptic-curve) discrete logarithm problem, which are the basis of most current cryptographic algorithms. PQC  includes various approaches to cryptography, such as lattice-based cryptography, code-based cryptography, multivariate-based cryptography, hash-based cryptography, and isogeny-based cryptography. 

The first isogeny-based cryptosystem was proposed by Couveignes in 1997 \cite{couveignes2006hard}, and then again independently by Rostovtsev and Stolbunov in 2006 (commonly referred to as CRS)  \cite{rostovtsev2006public}. They described a non-interactive key exchange using ordinary elliptic curves. New momentum in this field came in 2011 when De Feo and Jao proposed SIDH \cite{jao2011towards}, the supersingular isogeny Diffie-Hellman key exchange. A variant of this algorithm called SIKE was a promising candidate for NIST PQC standardization,\footnote{More information about standardization is available as \url{https://csrc.nist.gov/Projects/post-quantum-cryptography/post-quantum-cryptography-standardization}.} but it was broken in several independent papers in August 2022 \cite{castryck2022efficient,maino2022attack,robert2022breaking}. In 2018, Castryck, Lange, Martindale, Panny, and Renes introduced CSIDH \cite{castryck2018csidh}, or commutative-SIDH, a key exchange protocol that adapts CRS protocol to supersingular elliptic curves. CSIDH is not affected by the previously mentioned attacks.  

Compared to other post-quantum protocols, the main advantages of isogeny-based cryptography are smaller key sizes and ciphertext sizes. On the other hand, the main disadvantage of isogeny-based protocols has been the high computational cost of encryption and decryption. These advantages and disadvantages are particularly evident in digital signatures. SQISign, introduced in 2020 \cite{de2020sqisign}, is among the most promising and compact isogeny-based digital signatures. It has seen some speed improvements in 2022 \cite{de2022new}, but despite this, it is still several orders of magnitude slower than other post-quantum signature schemes.

Protocols like CRS, CSIDH or, for example, Charles, Goren and Lauter's hash function \cite{charles2009cryptographic} share the need to compute isogenies of low degree in finite field. An isogeny can be computed from the coordinates of the points in its kernel using V\'{e}lu's formulas \cite{velu1971isogenies}. To improve and accelerate isogeny computation, various approaches and variants of V\'{e}lu's formulas have been proposed for different curve models, such as Montgomery curves in \cite{costello2017simple}, Edwards curves \cite{cervantes2019stronger,kim2019optimized}, and Hessian curves \cite{broon2021isogenies}. An algorithm by Bernstein, De Feo, Leroux and Smith \cite{bernstein2020faster} reduces the cost of computation of isogeny of degree $N$ from $\calO(N)$ to $\widetilde{\calO}(\sqrt{N})$ and can be applied to Huff's and general Huff's curves \cite{cryptoeprint:2021/073}.

Radical isogenies are formulas designed for computing a chain of isogenies of the same small degree between elliptic curves over a finite field. They were first introduced by Castryck, Decru and Vercauteren in 2020 \cite{castryck2020radical}. The authors showed that using radical isogeny formulas in CSIDH-512 leads to a more efficient implementation and a speed-up of $19\%,$ see \cite[Section~6]{castryck2020radical}. In \cite{castryck2020radical}, formulas were given for $N\leq 13,$ and in 2022, the same group of authors, along with Houben \cite{castryck2023horizontal}, developed a different method for finding radical isogeny formulas for a given degree $N$, and provided formulas for $N\leq 37$.

The concept of radical isogeny formulas was initially introduced for elliptic curves in Tate normal form. Generally, an elliptic curve over a field $k$ and a point on that curve with an order of at least $N\geq 4$ are isomorphic to an elliptic curve of the form $E\colon y^2+(1-c)xy-by = x^3 - bx^2$ with $b,c\in k$, and a point $P=(0,0)$ of the same order $N$. This form is known as the Tate normal form and it provides two coefficients, denoted $b$ and $c$. Given a cyclic isogeny $\varphi\colon E\xrightarrow{} E'=E\slash\langr{P}$, radical isogeny formulas compute points $P'$ of order $N$ on $E'$ such that composition $E\xrightarrow{\varphi}E'\xrightarrow{}E'\slash\langr{P'}$ is cyclic of degree $N^2$. The coordinates of $P'$ are elements of the smallest field that contains the coefficients $b$ and $c$, along with a radicand $\rho$ that is a $N$-th root of a rational expression in the coefficients $b$ and $c$. The elliptic curve $E'$ and point $P'$ are also isomorphic to an elliptic curve in Tate normal form (for example, defined with coefficients $b'$ and $c'$) and a point $(0,0)$ of order $N$. This allows us to use radical isogeny formulas again, making the process iterative. The coefficients $b'$ and $c'$ can be expressed as elements of the same field as $P'$.

As a first contribution of this article, in Section \ref{sec:RadicalToModular}, we will extend the notion of radical isogeny formulas to the language of modular curves. To achieve this, we will utilize enhanced elliptic curves, which are curves paired with additional torsion data and affiliated with some congruence subgroup. The aforementioned parameters from Tate normal form and the radicand $\rho$ can all be regarded as functions on the set of equivalence classes of enhanced elliptic curves. This generalization of radical isogenies for degree $N$ is directly related to the modular curve $X_1(N)$, congruence subgroup $\G_1(N)$ and pairs of enhanced elliptic curves consisting of an elliptic curve and a point of order $N$.

In \cite{onuki2022radical}, Onuki and Moriya introduced radical isogeny formulas of degrees $3$ and $4$ on Montgomery curves. A Montgomery curve over a field $k$ is an elliptic curve of the form $E\colon y^2=x^3+Ax^2+x,$ where $A\in k$ and $A^2\neq 4$. The coefficient $A$ is called the Montgomery coefficient of $E$. For degree $4$ (degree $3$ is similar), the set of equivalence classes of enhanced elliptic curves for $\G_0(4)$, denoted by $S_0(4)$, is equal to the set of equivalence classes of enhanced elliptic curves for $\G_1(4)$. This equality implies the existence of radical isogenies formulas for the modular curve $X_0(4)$. The Montgomery coefficient $A$ represents a class in the set $S_0(4)$, see \cite[Section~2.3]{onuki2022radical}. In other words, we can say that the coefficient $A$ describes an enhanced elliptic curve where the additional torsion data is a cyclic subgroup of order $4$. The Montgomery coefficient for the curve $E'$ can be calculated by a rational expression depending on the fourth root from $4(A+2)$ see \cite[Theorem~8]{onuki2022radical}. 

The authors of \cite{onuki2022radical} explored the possibility of extending radical isogeny formulas to the modular curve $X_0(N)$ when $N\geq 5$. The idea behind this can be summarized in a few informal steps. First, take a modular curve of genus zero, such as $X_0(5)$. Then, find a parameter that specifies its set of equivalence classes of enhanced elliptic curves, find a model of a universal elliptic curve for $X_0(5)$ defined by that parameter (Tate, Montgomery, or something else) and then find a radical isogeny formula on such a curve. This approach is presented as an example, see \cite[Section~4]{onuki2022radical} and Section \ref{sec:OpenProblem}, that argues against the existence of radical isogeny formulas for that curve. While this example indicates that finding radical isogenies for degrees greater than $4$ is maybe not possible, a general answer was left as an open problem. This article provides a solution to that open problem, i.e. in Corollary \ref{cor:OpenProblem} we prove that radical isogeny formulas cannot exist on the set of equivalence classes of enhanced elliptic curves for $\G_0(N)$ when $N\geq 5$.

%%%---SECTION---%%%
\section*{Paper organization}
Section \ref{sec:Preliminaries} provides necessary background, including brief overview on elliptic curves, isogenies of elliptic curves, the definition of congruence subgroups, modular curves, semidirect product of groups, radical isogenies and the description of the previously mentioned open problem in Example \ref{ex:OpenProblem}. In the section \ref{sec:RadicalToModular} we generalize radical isogenies using modular curves. Section \ref{sec:RadicalToModularX0} extends the setting from Section \ref{sec:RadicalToModular} to include modular curve $X_0(N)$. In the same section Theorem \ref{thm:MainTheorem} is proved, and a corollary of that theorem is a solution to the open problem from Example \ref{ex:OpenProblem}.

%%%---SECTION---%%%
\section{Preliminaries}\label{sec:Preliminaries}
This section will provide summary of necessary background. For more details on elliptic and modular curves refer to \cite{silverman2009arithmetic}, \cite{diamond2005first} and \cite[Chapter~III]{cornell2013modular}.

%%%---SUBSECTION---%%%
\subsection{Elliptic curves}
Let $k$ be a field.	 An elliptic curve $E$ over $k$ is a smooth projective curve of genus one with a specified base point $\calO_E$. Group of all the points on $E$ defined over $k$ is denoted by $E(k)$. Given an integer $N$, multiplication by $N$ map is denoted with $[N]$. The kernel of this map is the $N$ torsion subgroup, $E[N]=\{P\in E(\overline{k})\colon[N]P=\calO_E\}$. A point $P$ on the curve $E$ is of order $N$ if $[N]P=\calO_E$ and $[m]P\neq \calO_E$ for $m<N$. For a curve $E$ as above and a point $P$ of order $N\geq 4,$ the following Lemma holds:

\begin{lemma}\label{lemma:TateNormalForm}
Let $E$ be an elliptic curve over $k$ and let $P\in E(k)$ be a point of order $N\geq 4$, 		then the pair $(E,P)$ is isomorphic to a unique pair of the form
\begin{equation}\label{eqn:TateNormalFormDef}
E\colon y^2+(1-c)xy-by = x^3 - bx^2,\ P=(0,0)
\end{equation}
with $b, c\in k$ and $$\Delta(b,c)=b^3(c^4-8bc^2-3c^3+16b^2-20bc+3c^2+b-c)\neq 0.$$
\end{lemma}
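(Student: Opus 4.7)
The approach is to start from an arbitrary long Weierstrass equation for $E$ and apply a sequence of admissible coordinate changes fixing the point $P$, tracking at each stage exactly where the hypothesis $\operatorname{ord}(P) \geq 4$ is used. First, pick any Weierstrass model $y^2 + a_1 xy + a_3 y = x^3 + a_2 x^2 + a_4 x + a_6$ for $E$ and translate $x$ so that $P = (0,0)$; substituting forces $a_6 = 0$. Computing the partial derivatives at $P$ shows that $P$ has order $2$ precisely when $a_3 = 0$, so the hypothesis $\operatorname{ord}(P) \geq 4$ already gives $a_3 \neq 0$. The substitution $(x,y) \mapsto (x,\, y + (a_4/a_3)\,x)$ is an admissible change of variable preserving $P$ and the Weierstrass form, and a direct computation shows that it kills $a_4$, producing an equation of the shape $y^2 + a_1' xy + a_3 y = x^3 + a_2' x^2$.

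In this model the tangent to $E$ at $P$ is the line $y = 0$; substituting back into the equation gives the third intersection $-2P = (-a_2',\, 0)$, and therefore $2P = (-a_2',\, a_1' a_2' - a_3)$. Comparing with $-P = (0,\, -a_3)$, the condition $2P \neq -P$, equivalent to $\operatorname{ord}(P) \neq 3$, forces $a_2' \neq 0$. The only remaining admissible isomorphism fixing $P$ and the current shape of the equation is $(x,y) \mapsto (u^2 x,\, u^3 y)$, under which $a_2'$ scales to $a_2'/u^2$ and $a_3$ to $a_3/u^3$; choosing $u = a_3/a_2'$ makes these two coefficients coincide, which is exactly the Tate normal form \eqref{eqn:TateNormalFormDef} with $b = -(a_2')^3/a_3^2$ and $c = 1 - a_1' a_2'/a_3$.

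For uniqueness, the most general admissible coordinate change preserving a long Weierstrass equation and the point $(0,0)$ is $(x,y) \mapsto (u^2 x,\, u^3 y + s u^2 x)$ with $u \in k^{\times}$ and $s \in k$. Requiring the transformed equation to be again in Tate normal form, the vanishing of the linear $x$-coefficient on the right-hand side gives $bs = 0$, and a quick check that $b = 0$ would make the curve singular at $(0,0)$ forces $s = 0$. The equality of the coefficients in the $y$-position and $x^2$-position of the resulting equation then becomes $b u^{-3} = b u^{-2}$, forcing $u = 1$. Finally, the displayed polynomial $b^3(c^4 - 8bc^2 - 3c^3 + 16b^2 - 20bc + 3c^2 + b - c)$ is the specialization of the generic Weierstrass discriminant formula to $a_1 = 1-c$, $a_2 = a_3 = -b$, $a_4 = a_6 = 0$, and its nonvanishing is the smoothness hypothesis on $E$.

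The main obstacle here is organizational rather than conceptual: one must keep track of which admissible coordinate changes remain available at each reduction step and confirm that the hypothesis $\operatorname{ord}(P) \geq 4$ enters exactly at the places needed to rule out $a_3 = 0$, $a_2' = 0$, and $b = 0$, so that both existence and uniqueness go through cleanly.
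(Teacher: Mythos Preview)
Your argument is correct and is the standard derivation of the Tate normal form: translate $P$ to the origin, use $\operatorname{ord}(P)\neq 2$ to get $a_3\neq 0$ and shear away $a_4$, use $\operatorname{ord}(P)\neq 3$ to get $a_2'\neq 0$ and then rescale so that $a_2=a_3$, and finally check uniqueness by running through the admissible changes of variable fixing $(0,0)$. The paper does not give its own proof of this lemma at all; it simply cites \cite[Lemma~2.1]{streng2015generators}, whose proof proceeds exactly along the lines you wrote, so there is nothing to compare beyond noting that you have supplied the details the paper omits.
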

\noindent Curve $E$ in (\ref{eqn:TateNormalFormDef}) is said to be in Tate normal form. For proof see \cite[Lemma~2.1]{streng2015generators}.

If $\text{char}(k)\nmid N$, we can define the Tate pairing as a bilinear map
$$t_N\colon E(k)[N]\times E(k)\slash NE(k)\xrightarrow{} k^*\slash(k^*)^N\colon (P_1,P_2)\mapsto t_N(P_1,P_2),$$ where $E(k)[N]$ consists of all the points in $E[N]$ defined over $k$.

Following \cite[Chapter~II.3]{silverman2009arithmetic}, a divisor for a curve $E$ is defined as a formal sum $\sum_{P\in E}n_P(P),$ where $n_P\in\Z$ and $n_P=0$ for all but finitely many $P\in E.$ A Miller function $f_{N,P_1}$ is any function on $E$ with divisor $N(P_1)- N(\calO_E).$ 
The support of a divisor is the set of points $P\in E$ for which $n_P\neq 0.$ 
Let $D$ be a $k$-rational divisor on $E$ that is linearly equivalent to $(P_2)-(O_E)$ and whose support is disjoint from $\{P_1,\calO_E\}$. The support of this divisor is disjoint from the divisor of Miller function $f_{N,P_1},$ thus $f_{N,P_1}(D)=\prod_{P\in E}f_{N,P_1}(P)^{n_P}$ is well-defined. Then, the Tate pairing can be calculated as $t_N(P_1,P_2)=f_{N,P_1}(D).$ Furthermore, if $P_1\neq P_2$ and the Miller function is normalized, the Tate pairing $t_N(P_1,P_2)$ is equal to $f_{N,P_1}(P_2)$. When $f_{N,P}$ is a Miller function as above and $P$ point of order $N$, there exists a function $g_{N,P}\in \overline{k}(E)$ such that
\begin{equation}\label{eqn:MillerFGDef}
f_{N,P}\circ [N]=g_{N,P}^N.
\end{equation}
The function $g_{N,P}$ can be used to define the Weil pairing, see \cite[Chapter~III.8]{silverman2009arithmetic} for details.

%%%---SUBSECTION---%%%
\subsection{Isogenies of elliptic curves}\label{sec:Isogenies}
Let $E$ and $E'$ be elliptic curves over $k$. An isogeny $\varphi\colon E\xrightarrow{} E'$ is a non-constant morphism satisfying $\varphi(\calO_E)=\calO_{E'}$. An example of an isogeny is multiplication by $N$. Except for the zero isogeny, every other isogeny is a finite map of curves, so there is a usual injection of function fields $\varphi^{*}\colon \overline{k}(E')\xrightarrow{}\overline{k}(E).$ The degree of $\varphi$, denoted by $\deg(\varphi),$ is the degree of the finite extension $\overline{k}(E)\slash\varphi^{*}(\overline{k}(E')).$ An isogeny is separable (inseparable, purely inseparable) if this finite extension is separable (inseparable, purely inseparable). There exists a dual isogeny $\widehat{\varphi}\colon E'\xrightarrow{} E$ for every isogeny $\varphi$. This dual isogeny satisfies $\widehat{\varphi}\circ\varphi=[\deg(\varphi)]$. A kernel of an isogeny is a finite subgroup of $E(\overline{k})$. The size of the kernel divides the degree of the isogeny, and they are equal when the isogeny is separable. Given a finite subgroup $C\subset E$ there exists a unique separable isogeny having domain $E$, codomain $E\slash\langr{C}$, and $C$ as its kernel. V\'{e}lu's formulas can be used to calculate this isogeny, see \cite[Theorem~1]{castryck2020radical} for a complete list of formulas.

%%%---SUBSECTION---%%%
\subsection{Congruence subgroups, modular and enhanced elliptic curves}
The group of $2\times 2$ matrices with integer entries and determinant equal to $1$ is 
$$\SL_2(\Z)=\{\sm{a}{b}{c}{d}\colon a,b,c,d\in\Z, ad-bc=1\}.$$
The principle congruence subgroup for $N>0$ is defined as
$$\G(N)=\{\sm{a}{b}{c}{d}\in\SL_2(\Z)\colon\modulo{\sm{a}{b}{c}{d}}{\sm{1}{0}{0}{1}}{N}\}.$$ 
The reduction modulo $N$ morphism $\Z\xrightarrow{}\zNz$ induces a homomorphism \mbox{$\SL_2(\Z)\xrightarrow{}\SL_2(\zNz)$} with kernel $\G(N)$, thus $\G(N)$ is normal subgroup in $\SL_2(\Z)$ of finite index. This homomorphism is a surjection, so there is an induced isomorphism $$\SL_2(\Z)\slash\G(N)\xrightarrow{\sim}\SL_2(\zNz).$$ 
Other standard congruence subgroups are 
\begin{align*}
\G_1(N)&=\{\sm{a}{b}{c}{d}\in\SL_2(\Z)\colon\modulo{\sm{a}{b}{c}{d}}{\sm{1}{*}{0}{1}}{N}\},\\
\G_0(N)&=\{\sm{a}{b}{c}{d}\in\SL_2(\Z)\colon\modulo{\sm{a}{b}{c}{d}}{\sm{*}{*}{0}{*}}{N}\}.
\end{align*}
These subgroups satisfy $\G(N)\subset\G_1(N)\subset\G_0(N)\subset\SL_2(\Z).$

Let $\calH=\{\tau\in\C\colon\text{Im}(\tau)>0\}$ be the upper half-plane and let $\sm{a}{b}{c}{d}$ in $\SL_2(\Z)$ be a matrix. The action of the matrix on $z\in\calH$ is defined by 
$$\sm{a}{b}{c}{d}(z)=\frac{az+b}{cz+d}.$$
Using this fractional linear transformation, for a congruence subgroup $\G$, we can define the modular curve by $$Y(\G)=\G\slash\calH=\{\G\tau\colon\tau\in\calH\}.$$ 
For $\G(N),\G_1(N),\G_0(N),$ 
$$Y(N)=\G(N)\slash\calH, Y_1(N)=\G_1(N)\slash\calH\ \text{and}\ Y_0(N)=\G_0(N)\slash\calH.$$
If the action is extended to $\calH^{*}=\calH\cup\Q\cup\{\infty\},$ following modular curves can be defined $$X(\G)=\G\slash\calH^{*}, X(N)=\G(N)\slash\calH^{*}, X_1(N)=\G_1(N)\slash\calH^{*}\ \text{and}\ X_0(N)=\G_0(N)\slash\calH^{*}.$$ 
Let $E$ be an elliptic curve over algebraically closed field whose characteristic does not divide $N.$ An enhanced elliptic curve for $\G_0(N)$ is an ordered pair $(E,C),$ where $C$ is a cyclic subgroup of $E$ of order $N$. Two enhanced elliptic curves $(E,C)$ and $(E',C')$ are equivalent if there exists an isomorphism $E\xrightarrow[]{\sim} E'$ that takes $C$ to $C'$. We denote the set of equivalence classes of enhanced elliptic curves for $\G_0(N)$ by $$S_0(N)=\{\text{enhanced elliptic curves for}\ \G_0(N)\}\slash\sim.$$ Similarly, an enhanced elliptic curve for $\G_1(N)$ is a pair $(E,P),$ where $P$ is a point of order $N$. Two enhanced elliptic curves $(E,P), (E',P')$ are equivalent if there exists an isomorphism $E\xrightarrow[]{\sim} E'$ that takes $P$ to $P'$. We denote the set of equivalence classes of enhanced elliptic curves for $\G_1(N)$ by $$S_1(N)=\{\text{enhanced elliptic curves for}\ \G_1(N)\}\slash\sim.$$ 

Following \cite[Chapter~1.3]{diamond2005first}, we can define the complex elliptic curve $E_{\tau}$  as the quotient of the complex plane by the lattice $$E_{\tau}\coloneqq\C\slash\Lambda_{\tau}=\{z+\Lambda_{\tau}\colon z\in\C\},$$ where $\Lambda_{\tau}=\Z\oplus\tau\Z$. Definition of the sets  $S_0(N)$ and $S_1(N)$ from the previous paragraph remains unchanged when the underlying field is $\C$ and $E$ is a complex elliptic curve. Points of $Y_1(N)$ are in bijection with isomorphism classes of pairs $(E,P)\in S_1(N).$ To establish this bijection, to $\tau\in\calH,$ associate the pair $(E_{\tau}, \frac{1}{N}+\Lambda_{\tau}).$ Any pair $(E,P)$ is isomorphic to $(E_{\tau}, \frac{1}{N}+\Lambda_{\tau})$ for some $\tau\in\calH$ and $E_{\tau}$ is isomorphic to $E_{\tau'}$ if and only if $\tau'\in\G_1(N)\tau.$ We have the following theorem.

\begin{theorem}\label{thm:moduliSpaceS1}
Let $N$ be a positive integer. The moduli space for $\G_1(N)$ is $$S_1(N)=\{[E_{\tau}, \frac{1}{N}+\Lambda_{\tau}]\colon\tau\in\calH\}.$$ Two points $[E_{\tau}, \frac{1}{N}+\Lambda_{\tau}]$ and $[E_{\tau'}, \frac{1}{N}+\Lambda_{\tau'}]$ are equal if and only if $\G_1(N)\tau=\G_1(N)\tau'.$ Thus, there is a bijection $$\psi_1\colon S_1(N)\xrightarrow{\sim} Y_1(N),\quad [\C\slash\Lambda_{\tau},\frac{1}{N}+\Lambda_{\tau}]\mapsto \G_1(N)\tau.$$
\begin{proof}
See \cite[Theorem~1.5.1.]{diamond2005first}.
\end{proof}
\end{theorem}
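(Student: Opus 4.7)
The plan is to combine the complex uniformization of elliptic curves with a careful analysis of when a rescaling $z \mapsto mz$ between lattices preserves the marked $N$-torsion point. This decomposes into three steps: (i) every class in $S_1(N)$ has a representative of the desired form $(E_\tau, \frac{1}{N}+\Lambda_\tau)$, (ii) two such representatives are equivalent exactly when $\tau$ and $\tau'$ lie in the same $\G_1(N)$-orbit, and (iii) the induced map $\psi_1$ is a bijection.

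For step (i), I would start from an arbitrary $(E,P) \in S_1(N)$. Complex uniformization identifies $E$ with some $\C\slash\Lambda$ where $\Lambda = \omega_1 \Z + \omega_2 \Z$ with $\omega_2/\omega_1 \in \calH$, and under this identification $P$ corresponds to a class $\frac{1}{N}(a\omega_1 + b\omega_2) + \Lambda$ for integers $a, b$. The condition that $P$ has exact order $N$ forces $\gcd(a,b,N) = 1$, so a suitable $\SL_2(\Z)$ change of basis of $\Lambda$ converts $P$ into $\frac{1}{N}\omega_1' + \Lambda$. Rescaling by $1/\omega_1'$ then normalizes the lattice to $\Lambda_\tau$ while preserving the marked point.

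For step (ii), any isomorphism $\C\slash\Lambda_\tau \to \C\slash\Lambda_{\tau'}$ is of the form $z \mapsto mz$ with $m \in \C^*$ satisfying $m\Lambda_\tau = \Lambda_{\tau'}$. Writing $m\tau = a\tau' + b$ and $m = c\tau' + d$ gives a matrix $\gamma = \sm{a}{b}{c}{d} \in \SL_2(\Z)$ with $\tau = \gamma \tau'$. The compatibility condition on marked points, $m/N \equiv 1/N \pmod{\Lambda_{\tau'}}$, becomes $d + c\tau' \equiv 1 \pmod{N\Lambda_{\tau'}}$; reading this in the $\Z$-basis $\{1,\tau'\}$ of $\Lambda_{\tau'}$ yields $c \equiv 0 \pmod N$ and $d \equiv 1 \pmod N$. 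Combined with $ad - bc = 1$ this forces $a \equiv 1 \pmod N$, so $\gamma \in \G_1(N)$; the converse direction is immediate. Step (iii) then follows by combining the previous two steps.

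The main obstacle I anticipate is the bookkeeping in step (ii): one must carefully translate the single congruence on $m$ into the three simultaneous matrix conditions defining $\G_1(N)$, and be consistent about orientation so that it is $\gamma$ (and not, say, $\gamma^{-1}$) that ends up in $\G_1(N)$. This distinction is harmless at the level of orbits, since $\G_1(N)$ is a group, but only a consistent choice of conventions makes the final bijection $\psi_1$ well defined as stated.
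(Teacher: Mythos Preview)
Your proposal is correct and is precisely the standard argument from Diamond--Shurman \cite[Theorem~1.5.1]{diamond2005first}, which is exactly what the paper cites in lieu of a proof. Since the paper gives no argument of its own beyond that citation, your sketch is in fact more detailed than the paper's treatment while following the same route.
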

\noindent Theorem \ref{thm:moduliSpaceS1} has analogous versions for congruence subgroups $\G_0(N)$ and $\G(N)$, also part of the \cite[Theorem~1.5.1.]{diamond2005first}.

%%%---SUBSECTION---%%%
\subsection{Semidirect product of groups}\label{sec:SemidirectProduct}
Following \cite{KonradSemidirect}, for two groups $G_1$ and $G_2$ and an action $\widehat{\varphi}\colon G_2\rightarrow \text{Aut}(G_1)$ of $G_2$ on $G_1$ (by automorphisms), the corresponding semidirect product $G_1\rtimes_{\widehat{\varphi}} G_2$ is defined as a set 
$$G_1\times G_2 = \{(g_1,g_2)\colon g_1\in G_1, g_2\in G_2\},$$ 
where the group law on $G_1\rtimes_{\widehat{\varphi}} G_2$ is 
$$(g_1,g_2)(g_1',g_2') = (g_1\widehat{\varphi}_{g_2}(g_1'),g_2g_2').$$
Element $(e_{G_1},e_{G_2})$ is the identity, and inverse for an element $(g_1,g_2)$ is $$(g_1,g_2)^{-1}=(\widehat{\varphi}_{g_2^{-1}}(g_1^{-1}),g_2^{-1})=((\widehat{\varphi}_{g_2^{-1}}(g_1))^{-1},g_2^{-1}).$$ Examples of subgroups are $G_1\times {e_{G_2}}=\{(g_1,e_{G_2})\colon g_1\in G_1\}$ which is a normal subgroup, and ${e_{G_1}} \times G_2=\{(e_{G_1},g_2)\colon g_2\in G_2\}$. 

%%%---SUBSECTION---%%%
\subsection{Radical isogenies}\label{sec:RadicalIsogenies}
Following \cite{castryck2020radical}, this section will provide a necessary background on radical isogenies. Let $k$ be a field, $N\geq 4$ such that $\text{char}(k)\nmid N$. Consider an elliptic curve $E$ over $k$ and a point $P\in E(k)$ of order $N$. Using Lemma \ref{lemma:TateNormalForm}, the curve-point pair $(E,P)$ is isomorphic to a unique pair of a curve $$y^2+(1-c)xy-by=x^3-bx^2,$$ where $b,c\in k,$ and a point $(0,0)$ of order $N$. There exists an isogeny $\varphi\colon E\xrightarrow{} E\slash\langr{P}$ with $\langr{P}$, a cyclic subgroup generated by the point $P$, as a kernel. We denote curve $E\slash\langr{P}$ over $k$ by $E'$ and let $P'$ be a point on $E'$ of order $N$ such that $\widehat{\varphi}(P')=P,$ where $\widehat{\varphi}$ is a dual isogeny of $\varphi.$ The point $P'$ satisfying this condition is called $P$-distinguished and it is not unique. According to \cite[Theorem~5]{castryck2020radical} the coordinates of the point $P'$ can be expressed using a formula that depends on $b,c$ and $\sqrt[N]{\rho},$ where $\rho$ is a representative of Tate pairing $t_N(P,-P)$. Hence, the point $P'$ is defined over $k(b,c,\sqrt[N]{\rho}).$ As $P'$ is of order $N$ on curve $E',$ a Tate normal form for this pair can be defined by the unique coefficients $b'$ and $c'.$ The iterative process of radical isogeny formulas can be repeated on pair $(E',P')$. Moreover, the formulas for $b'$ and $c'$ can be expressed directly as elements of the field extension $k(b,c,\sqrt[N]{\rho})$, which is a simple radical\footnote{A field extension $K\subset L$ is a simple radical extension of degree $N\geq 2$ if there exists an $\alpha$ such that $L=K(\alpha),\alpha^N\in K$, and $x^N-\alpha^N\in K[x]$ is irreducible.} extension of $k(b,c)$. The explicit radical isogeny formulas when $N=5,$ are written in the following example:

\begin{example}[\!\!\protect{\cite[Section~4]{castryck2020radical}}]
Let $N=5.$ Elliptic curve $E$ is of the form $$y^2+(1-b)xy-by=x^3-bx^2,$$ and, using V\'{e}lu's formulas, curve $E'$ is equal to $$y^2+(1-b)xy-by=x^3-bx^2-5b(b^2+2b-1)x-b(b^4+10b^3-5b^2+15b-1).$$ With some details omitted, $\rho=f_{5,P}(-P)=b,\alpha=\sqrt[5]{\rho}$ and point $P'$ has coordinates
\begin{align*}
x_0'&= 5\alpha^4+(b-3)\alpha^3+(b+2)\alpha^2+(2b-1)\alpha-2b, \\
y_0'&= 5\alpha^4+(b-3)\alpha^3+(b^2-10b+1)\alpha^2+(13b-b^2)\alpha-b^2-11b.
\end{align*}
After translating point $P'$ to $(0,0),$ isomorphic curve in Tate normal form will be $$E'\colon y^2+(1-b')xy-b'y=x^3-b'x^2,$$ where $$b'=\alpha\frac{\alpha^4+3\alpha^3+4\alpha^2+2\alpha+1}{\alpha^4-2\alpha^3+4\alpha^2-3\alpha+1}$$ and the process can be repeated. 
\end{example}
The standard method of calculating isogenies requires a point of a particular order for each isogeny in the chain. With radical isogeny formulas, such a point is only required for the initial step, i.e. the one step that uses V\'{e}lu's formulas. Subsequent steps can be calculated without any knowledge of torsion points. The list of formulas for radicand $\rho$ for $N\leq 13$ can be found in \cite[Section~5]{castryck2020radical} and link to a repository containing formulas for prime powers $16 < N \leq 37$ can be found in \cite[Section~4.3]{castryck2023horizontal}.

\subsubsection{Radical isogenies on Montgomery curves}\label{sec:OpenProblem}
In \cite{onuki2022radical}, Onuki and Moriya introduced radical isogeny formulas on Montgomery curves of degrees $3$ and $4.$ A Montgomery curve over a field $k$ is an elliptic curve of the form $$E\colon y^2=x^3+Ax^2+x,$$ where $A\in k$ and $A^2\neq 4.$ The coefficient $A$ determines a class of enhanced elliptic curve $(E,(0,0))$ in the set $S_0(4),$ see \cite[Section~2.3]{onuki2022radical} for details. Applying radical isogeny formulas on elements of set $S_1(N),$ i.e. on an enhanced elliptic curve $(E,P),$ results in a curve-point pair that is also an element of $S_1(N).$ When $N=3$ or $4$, the equality $S_0(N)=S_1(N)$ holds, and the existence of radical isogeny formulas on $S_1(3)$ and $S_1(4)$ implies a radical isogeny formula on $S_0(3)$ and $S_0(4),$ respectively. This means that there is a formula between Montgomery coefficients of curves, see \cite[Section~3]{onuki2022radical}. However, the methods used in \cite{onuki2022radical} for cases $N=3$ or $4$ cannot be directly applied to case $N\geq 5,$ partly because $S_0(N)\neq S_1(N).$ Moreover, developing radical isogeny formulas on $S_0(N)$ when $N\geq 5$ might not be possible, as illustrated by the following example.

\begin{example}[\!\!\protect{\cite[Section~4]{onuki2022radical}}]\label{ex:OpenProblem}
Let $N=5$. Let $k$ be a field with $\text{char}(k)\nmid N$, and $E,E'$ two elliptic curves over the field $k$ given in Tate normal form:
\begin{align*}
E &\colon y^2+(1-b)xy-by=x^3-bx, \\
E' &\colon y^2+(1-b')xy-b'y=x^3-b'x.
\end{align*}
Points $(0,0)$ are of order $5$ on these curves. The cyclic subgroup of $E$ generated by point $(0,0)$ is $$\{\calO_E,(0,0),(b,b^2),(b,0),(0,b)\}.$$
Pairs $(E,(0,0))$ and $(E',(0,0))$ are equivalent if and only if $b=b',$ while pairs $(E,\langr{(0,0)})$ and $(E',\langr{(0,0)})$ are equivalent if and only if $b=b'$ or $b=-\frac{1}{b'}.$ From this we have $\frac{b^2-1}{b}=\frac{b'^{2}-1}{b'},$ thus $\frac{b^2-1}{b}$ is a parametrization of $S_0(5).$ From radical isogeny formula we know that $b'$ is a rational expression in a fifth root of $b,$ i.e. $\Q(b')=\Q(\sqrt[5]{b}).$ Let $\beta=\frac{b^2-1}{b}$ and $\beta'=\frac{b'^{2}-1}{b'}.$ Field extension $\Q(b)\slash\Q(\beta)$ is of degree $2.$ Adjoining to the field extension $\Q(b')\slash\Q(\beta)$ a primitive fifth root of unity $\zeta_5\in\C,$ we obtain a Galois extension $\Q(\zeta_5)(b')\slash\Q(\zeta_5)(\beta)$ of degree $10$. Galois group of this extension $\text{Gal}(\Q(\zeta_5)(b')\slash\Q(\zeta_5)(\beta))$ is generated by automorphisms $\sigma\colon b'\mapsto-\frac{1}{b'}$ and $\tau\colon b'\mapsto\zeta_5b'$. The fixed field of $\sigma$ is $\Q(\zeta_5)(\beta')$, and of $\tau$ is $\Q(\zeta_5)(b).$ Because $\tau^{-1}\sigma\tau\neq\sigma,$ the group $\langr{\sigma}$ is not a normal subgroup of Galois group $\text{Gal}(\Q(\zeta_5)(b')\slash\Q(\zeta_5)(\beta))$, thus extension \mbox{$\Q(\zeta_5)(\beta')\slash\Q(\zeta_5)(\beta)$} cannot be a Galois extension. 
\end{example}

If the parameter $\beta'$ from Example \ref{ex:OpenProblem} could be expressed as a rational expression depending on the parameter $\beta,$ we would have a direct and simpler way (quadratic equation) to calculate $b'$, rather than the radical isogeny formulas. However, since the field extension \mbox{$\Q(\zeta_5)(\beta')\slash\Q(\zeta_5)(\beta)$} is not a Galois extension, this is not possible. Nevertheless, it may be possible to find a different $\beta',$ i.e. a different parametrization of $S_0(5)$ which will make the field extension $\Q(\zeta_5)(\beta')\slash\Q(\zeta_5)(\beta)$ Galois. 

%%%---SECTION---%%%
\section{Radical isogenies in the language of modular curves}\label{sec:RadicalToModular}
Throughout this section we are using the same notation introduced in Section \ref{sec:RadicalIsogenies}, $E$ is the starting elliptic curve over a field $k$, $N\geq 4$ such that $\text{char}(k)\nmid N$, $P\in E(k)$ a point of order $N$, $E'$ a curve over $k$ defined with $E\slash\langr{P}$, $\varphi\colon E\xrightarrow{} E'$ an isogeny with kernel equal to $\langr{P}$ and $P'$ a point of order $N$ on $E'$ such that $\widehat{\varphi}(P')=P$. 

We will continue to work with enhanced elliptic curves for different congruence subgroups. For any elliptic curve $\widetilde{E}$ and point $\widetilde{P}$ of order $N\geq 4$, let its unique Tate normal form be defined with parameters $\widetilde{b}$ and $\widetilde{c}$. Let $\mathbf{b}$ denote a mapping $(\widetilde{E},\widetilde{P})\mapsto \widetilde{b},$ i.e. $\mathbf{b}$ is a function on the set of the enhanced elliptic curves for $\G_1(N),$ such that for a curve $(\widetilde{E},\widetilde{P})$ it returns parameter $\widetilde{b}$ from corresponding Tate normal form. This is a well-defined function because Tate's normal form is unique. Analogously, for parameter $\widetilde{c},$ function $\mathbf{c}\colon(E,P)\mapsto c$ is well-defined. Definition of modular functions on enhanced elliptic curves implies that $\mathbf{b}$ and $\mathbf{c}$ are elements of $k(X_1(N)).$ For curves $E$ and $E'$ we have $(E,P)\xmapsto{\mathbf{b}} b$, $(E,P)\xmapsto{\mathbf{c}} c$, $(E',P')\xmapsto{\mathbf{b}} b'$ and $(E',P')\xmapsto{\mathbf{c}} c'$. We would like to connect parameters $b,c$ with $b',c'$ using modular curves and maps on them. The following sequence of maps will be considered:
\begin{equation}\label{eqn:mapsX1}
\begin{aligned}
&(E,P)\xrightarrow{} (E',P')\xmapsto{\mathbf{b}} b', \\
&(E,P)\xrightarrow{} (E',P')\xmapsto{\mathbf{c}} c'.
\end{aligned} 
\end{equation}
Since the point $P'$ is not unique, the map $(E,P)\xrightarrow{} (E',P')$ is not uniquely defined, and therefore is no obvious connection on $X_1(N)$. For a point $P$ of order $N,$ let $R$ be a point on curve $E$ of order $N^2$ such that $[N]R=P$. This point $R$ is not unique. The pair $(E,R)$ is an enhanced elliptic curve for $\G_1(N^2).$ Let $P'$ be an image of a point $R$ under the isogeny $\varphi,$ i.e. $$P'\coloneqq \varphi(R) = R+\langr{P}.$$ This is a point of order $N$ on the curve $E'$. Since we have $$\hat{\varphi}(P')=\hat{\varphi}(\varphi(R))=[\deg\ \varphi]R=[N]R=P,$$ point $P'$ is $P$-distinguished. We can modify the sequence of maps in (\ref{eqn:mapsX1}) and continue to work with parameter $b$ and associated functions, as the approach for $c$ is the same. Beginning with the enhanced elliptic curve $(E,R),$ we have the following maps: 
\begin{align}
(E,R)\xrightarrow{} (E,[N]R) &= (E,P)\xmapsto{\mathbf{b}} b, \label{eqn:mapsX1Extb}\\
(E,R)\xrightarrow{} (E\slash\langr{[N]R},R+\langr{[N]R}) &= (E\slash\langr{P},R+\langr{P}) = (E',P')\xmapsto{\mathbf{b}} b' \label{eqn:mapsX1Extb'}.
\end{align} 
Using the mappings described in (\ref{eqn:mapsX1Extb'}), we can, similar to $\mathbf{b}$, define a function $\mathbf{b'}\colon (E,R)\mapsto b'$, which is a function on the set of enhanced elliptic curves for $\G_1(N^2).$ Maps and functions are visualized in Figure \ref{fig:mapsOnX1Connections}.

\begin{figure}[h]
\begin{center}
\begin{tikzcd}
	  & {X_1(N^2), \G_1(N^2)} \arrow[r, squiggly, no head] & {(E,R)} \arrow[d, "N\cdot" left] \arrow[rd, "\varphi"] & \\
	  & 											& {(E,[N]R)} \arrow[d, equal] 				   & {(E\slash\langr{[N]R},R+\langr{[N]R})} \arrow[d, equal]\\
      & {X_1(N), \G_1(N)} \arrow[r, squiggly, no head] & {(E,P)} \arrow[d, "{\mathbf{b}}"] 
      	   & {(E',P')} \arrow[d, "{\mathbf{b}}"] \\
      & 	& b  & b'                        
\end{tikzcd}
\caption{Maps on enhanced elliptic curves}
\label{fig:mapsOnX1Connections}
\end{center}
\end{figure}
The connection between parameters $b$ and $b'$ can now be extended to an enhanced elliptic curve $(E,R)$, i.e. to functions in $X_1(N^2)$. For every $N,$ let $\pi_{1,N}$ and $\pi_{2,N}$ define a pair of pullback operators:
\begin{align*}
\pi_{1,N}^{*} &\colon k(X_1(N))\xrightarrow{} k(X_1(N^2)),\ \pi_{1,N}((E,R))=(E,[N]R),\\
\pi_{2,N}^{*} &\colon k(X_1(N))\xrightarrow{} k(X_1(N^2)),\ \pi_{2,N}((E,R))=(E\slash \langr{[N]R},R+ \langr{[N]R}).
\end{align*}
From
$$(\pi_{1,N}^{*}\mathbf{b})(E,R)=\mathbf{b}(\pi_{1,N}(E,R))=\mathbf{b}(E,[N]R)=\mathbf{b}(E,P)$$
and
$$(\pi_{2,N}^{*}\mathbf{b})(E,R)=\mathbf{b}(\pi_{2,N}(E,R))=\mathbf{b}(E\slash\langr{[N]R}, R+\langr{[N]R})=\mathbf{b}(E',P')=\mathbf{b}'(E,R),$$ 
we can identify $\mathbf{b}$ and $\mathbf{b}'$ with their respective pullbacks by $\pi_{1,N}$ and $\pi_{2,N}$ and define $$b\coloneqq\pi_{1,N}^{*}\mathbf{b}\ \text{and}\ b'\coloneqq\pi_{2,N}^{*}\mathbf{b}$$ as functions on $X_1(N^2).$ Function $b'$ is an element of $\pi_{2,N}^{*}(k(X_1(N))),$ so if proved that there exist some modular function $g$ in $k(X_1(N^2)),$ defined using $b$ and $c,$ such that
\begin{equation}\label{eqn:todoFieldEqForN}
\pi_{1,N}^{*}(k(X_1(N)))(g)=\pi_{2,N}^{*}(k(X_1(N))),
\end{equation}
$b'$ will also be an element of $\pi_{1,N}^{*}(k(X_1(N)))(g).$

Let $P$ be a point of order $N$ as before, and let $f_{N,P}$ be a normalized Miller function. With the value of $f_{N,P}$ at point $-P,$ we can define a modular function $f$ on the set of enhanced elliptic curves for $\G_1(N)$ as: $$f\colon(E,P)\mapsto f_{N,P}(-P) \in k(X_1(N)).$$ For the function $f_{N,P}$ and the point $P$, from equation (\ref{eqn:MillerFGDef}), there exists a function $g_{N,P}\in \overline{k}(E)$ such that $f_{N,P}\circ [N]=g_{N,P}^N.$ 
Using this equality, for an enhanced elliptic curve $(E,R),$ where, as before, $P=[N]R,$ we have a function on $X_1(N^2)$ given by
\begin{align*}
(E,R)\mapsto f_{N,[N]R}(-[N]R) &= f_{N,[N]R}([N](-R)) \\
	&=g_{N,[N]R}(-R)^N=g_{N,P}(-R)^N.
\end{align*}
The function $g$ defined as $g\coloneqq(E,R)\mapsto g_{N,P}(-R)\in k(X_1(N^2))$ satisfies the property $$g^N=f,$$ which means that the $N$-th root of $f$ is a function on $X_1(N^2).$ Both functions $b,b'$, as well as function $g$ are elements of $k(X_1(N^2))$. However, due to the large size of this field, it is currently impossible to prove (\ref{eqn:todoFieldEqForN}). Thus, it is necessary to identify a smaller quotient of $X_1(N^2)$ where $b$, $b'$, and $g$ are well-defined.

%%%---SUBSECTION---%%%
\subsection{"Shrinking" the field of definition}
To gain a better understanding of the function $b'$, we will investigate the preimages of $(E,P)$ under the pullback operator $\pi_{2,N}.$ Specifically, we will investigate pairs $(E,R)$ and $(E,R')$ that are mapped by $\pi_{2,N}$ to the same point \mbox{$(E\slash\langr{[N]R}, R+\langr{[N]R})$.} For the equality
$$(E\slash\langr{[N]R'}, R'+\langr{[N]R'})=(E\slash\langr{[N]R}, R+\langr{[N]R})$$ to hold, we require $\langr{[N]R'}=\langr{[N]R}$ and $R'+\langr{[N]R'}=R+\langr{[N]R}.$ Combining these conditions, we get $R'+\langr{[N]R}=R+\langr{[N]R},$ which implies that there exists some $l\in\zNz$ such that $$R'=R+[l]\cdot([N]R)\ \text{and}\ [N]R'=[N](R+[l]P).$$
Therefore, we have $$\langr{[N](R+[l]P)} = \langr{[N]R}.$$
Since point $R$ has order $N^2$, the points $(E,R), (E, R+[1\cdot N]R),\dots,(E,R+[(N-1)\cdot N]R)$ are all mapped to the same final point. From the definition of $b'$, it is apparent that it is a function on $X_1(N^2)$ that maps points of this form to the same final point.

Let $t_m$ be an operator on $S_1(N^2)$ defined as $t_m\colon (E,\overline{P})\mapsto (E, [m]\overline{P}).$ When $m=N+1,$ define $t\coloneqq t_{N+1}.$ On an enhanced elliptic curve $(E,R)\in S_1(N^2),$ this operator act as follows:
$$(E,R)\xmapsto{t}(E,[N+1]R)\xmapsto{t}(E,[(N+1)^2]R)\xmapsto{t}\dots\xmapsto{t}(E,[(N+1)^{N-1}]R).$$ The order of the operator $t$ is equal to $N$ since we have $t^N(E,R)=(E,[(N+1)^N]R)=(E,R).$ Composing $t$ with $\pi_{1,N}$ on the enhanced elliptic curve $(E,R),$ we have:  
\begin{align*}
\pi_{1,N}(t(E,R)) 
		&= \pi_{1,N}(E,[N+1]R) \\
		&= (E,[N(N+1)]R)\ (\text{since order of}\ R\ \text{is}\ N^2) \\
		&= (E,[N]R) \\
		&= \pi_{1,N}(E,R),
\end{align*}
and for $\pi_{2,N}$:
\begin{align*}
\pi_{2,N}(t(E,R))
		&=\pi_{2,N}(E,[N+1]R) \\
		&=(E\slash\langr{[N(N+1)]R},[N+1]R+\langr{[N(N+1)]R}) \\
		&=(E\slash\langr{[N]R},R+\langr{[N]R})\ (\text{since}\ [N]R\in\langr{[N]R}) \\
		&= \pi_{2,N}(E,R),
\end{align*}
thus, every pullback by $\pi_{1,N}$ or by $\pi_{2,N}$ will be invariant under $t.$ Modular function $(E,R)\xmapsto{g} g_{N,P}(-R),$ with property $g^N=f,$ is also invariant under $t.$ Referring again to \cite[Chapter~III.8]{silverman2009arithmetic} for more details, function $g_{N,[N]R}$ can be used to define Weil pairing 
$$e_N(S,P)=\frac{g_{N,[N]R}(X+S)}{g_{N,[N]R}(X)},$$
where $X\in E$ and $S,P \in E[N]$ with $S=P$ allowed, and as before, we have $P=[N]R$. To see that function $g$ is invariant under $t,$ let $(E,R)\in S_1(N^2),$ then  
\begin{align*}
t(E,R)=(E,[N+1]R)&\xmapsto{g} g_{N,[N(N+1)]R}(-[N+1]R) \\
		  &= g_{N,[N]R}(-[N]R-R) \\
		  &= g_{N,[N]R}(-R-P)
\end{align*}
and together with the bilinearity and alternating property of Weil pairing,
\begin{align*}
g_{N,[N]R}(-R-P)&=g_{N,[N]R}(-R)e_{N}(-P,P)=g_{N,[N]R}(-R)e_N([N]P,P) \\
			  &=g_{N,[N]R}(-R)e_N(P,P)^{N-1}=g_{N,[N]R}(-R)=g_{N,P}(-R).
\end{align*}
   
Let $\langr{t}$ denote the group of automorphisms of $X_1(N^2)$ generated by $t$. A function on $X_1(N^2)$ that is invariant under the operator $t$ can be viewed as a function on the quotient $X_1(N^2)\slash\langr{t}.$ As discussed above, $b'$ is an example of such a function. The quotient $X_1(N^2)\slash\langr{t},$ i.e. the quotient of modular curve with the operator, is again a modular curve. To see this, following \cite{katz1985arithmetic} and \cite{deligne1973schemas} we can assume, for a field $k$ defined at the beginning of this section, that $k=\C.$ Then, we have the following proposition, which explicitly calculates the congruence subgroup defining this quotient, i.e. corresponding modular curve.

\begin{proposition}\label{prop:WellDefModularCurve}
Let $t$ be an operator defined on the set of enhanced elliptic curves for $\G_1(N^2)$ with $t(E,R)=(E,[N+1]R).$ Let $\langr{t}$ denote the subgroup of automorphisms of $X_1(N^2)$ generated by $t.$ The quotient of the extended upper half-plane $\calH^{*}=\calH\cup\Q\cup\{\infty\}$ and the congruence subgroup $$\tildeG(N)\coloneqq\left\{\sm{a}{b}{c}{d}\in \SL_2(\Z)\colon\modulo{c}{0}{N^2},\ \modulo{a,d}{1}{N}\right\},$$ i.e. $\tildeG(N)\slash\calH^{*},$ is a modular curve consisting of all the functions on $X_1(N^2)$ invariant under $t.$

\begin{proof}
As shown in \cite[Section~1.5]{diamond2005first}, sets of equivalence classes of enhanced elliptic curves can be used to describe the quotients of the upper half-plane by congruence subgroups. In other words, for a function $f$ on $X_1(N^2)\slash\langr{t},$ there is a corresponding meromorphic function $\mathbf{f}$ on the upper half-plane that is invariant under the action of $\G_1(N^2)$ and a matrix $\mathbf{t}\in\SL_2(\Z)$ corresponding to the operator $t.$ To see this, note that Theorem \ref{thm:moduliSpaceS1} shows that $S_1(N^2)$ is a moduli space of isomorphism classes of complex elliptic curves and $N^2$-torsion data, i.e. 
$$S_1(N^2)=\{[E_{\tau},\frac{1}{N^2}+\Lambda_{\tau}]\},$$
where $\tau,\Lambda_{\tau}$ and $E_{\tau}$ are defined as in Section \ref{sec:Preliminaries}. Describing what the operator $t$ does in the sense of congruence subgroup implies working with the pair $(E,R)$ after applying the operator $t,$ i.e. with
$$t(E_{\tau}, \frac{1}{N^2} + \Lambda_{\tau})=(E_{\tau}, \frac{N+1}{N^2} + \Lambda_{\tau}).$$ We need to find $\tau'\in\calH,$ such that $(E_{\tau}, \frac{N+1}{N^2} + \Lambda_{\tau})$ is isomorphic to $(E_{\tau'},\frac{1}{N^2}+\Lambda_{\tau'}).$ Let \mbox{$\tau'=\frac{(1-N)\tau-1}{N^2\tau+1+N}$} and $\Lambda_{\tau'}=\langr{1,\tau'}.$ Elements $1$ and $\tau$ are linear combination of $1$ and $\tau'$, which is obvious for $1$, and for $\tau$ we have: 
$$(1+N)(N^2\tau+N+1)\cdot\frac{(1-N)\tau-1}{N^2\tau+N+1}+(N^2\tau+N+1)\cdot 1=\tau.$$ 
From this, $\Lambda_{\tau'}$ is isomorphic to $\Lambda_{\tau}.$ Moreover, for the matrix $$\mathbf{t}=\sm{1-N}{-1}{N^2}{1+N}\in \G_0(N^2)\setminus \G_1(N^2),$$ using the usual fractional linear transformation on $\calH,$ we have $\mathbf{t}(\tau)=\tau'.$ Desired congruence subgroup $\tildeG(N)$ is generated by $\G_1(N^2)$ and matrix $\mathbf{t}$, thus 
$$\tildeG(N)=\left\{\sm{a}{b}{c}{d}\in \SL_2(\Z)\colon\modulo{c}{0}{N^2},\ \modulo{a,d}{1}{N}\right\}.$$ It is clear from the construction of the congruence subgroup $\tildeG(N)$ that the quotient $\tildeG(N)\slash\calH^{*}$ defines a modular curve consisting of all the functions on $X_1(N^2)$ invariant under $t.$
\end{proof}
\end{proposition}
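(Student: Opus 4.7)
The plan is to realise the operator $t$ as the fractional linear action of a single matrix $\mathbf{t}\in\SL_2(\Z)$ on $\calH$ under the moduli-theoretic identification from Theorem \ref{thm:moduliSpaceS1}, and then describe the subgroup of $\SL_2(\Z)$ generated by $\Gamma_1(N^2)$ together with $\mathbf{t}$. Once this is done, a function in $k(X_1(N^2))$ (already $\G_1(N^2)$-invariant by construction) is fixed by $\langr{t}$ if and only if it is fixed by the subgroup $\langr{\G_1(N^2),\mathbf{t}}$, so the quotient $X_1(N^2)\slash\langr{t}$ is the modular curve associated to that subgroup.

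The concrete step is to locate $\mathbf{t}$. Under the dictionary $\tau\mapsto(E_{\tau},\tfrac{1}{N^2}+\Lambda_{\tau})$, the operator $t$ sends this pair to $(E_{\tau},\tfrac{N+1}{N^2}+\Lambda_{\tau})$. Any isomorphism $E_{\tau'}\xrightarrow{\sim} E_{\tau}$ is multiplication by $\lambda=c\tau+d$ for some $\sm{a}{b}{c}{d}\in\SL_2(\Z)$ with $\tau'=(a\tau+b)/(c\tau+d)$, and it carries $\tfrac{1}{N^2}+\Lambda_{\tau'}$ to $\tfrac{c\tau+d}{N^2}+\Lambda_{\tau}$. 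Requiring $(c\tau+d)/N^2\equiv (N+1)/N^2\pmod{\Lambda_{\tau}}$ forces $c\equiv 0\pmod{N^2}$ and $d\equiv N+1\pmod{N^2}$; the choice $\mathbf{t}=\sm{1-N}{-1}{N^2}{N+1}$ has determinant $1$ and satisfies these congruences, hence lies in $\G_0(N^2)$ and realises $t$ on the upper half-plane.

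It remains to identify $\langr{\G_1(N^2),\mathbf{t}}$. The reduction map $\G_0(N^2)\twoheadrightarrow(\Z\slash N^2\Z)^{*}$ sending $\sm{a}{b}{c}{d}\mapsto d\bmod N^2$ has kernel $\G_1(N^2)$ and sends $\mathbf{t}$ to the class $1+N$. The binomial expansion gives $(1+N)^{k}\equiv 1+kN\pmod{N^2}$, so the cyclic subgroup generated by $1+N$ is precisely the kernel of the further reduction $(\Z\slash N^2\Z)^{*}\twoheadrightarrow(\Z\slash N\Z)^{*}$. Pulling back, the subgroup of $\G_0(N^2)$ generated by $\G_1(N^2)$ and $\mathbf{t}$ consists of matrices with $c\equiv 0\pmod{N^2}$ and $d\equiv 1\pmod N$; the relation $ad-bc=1$ then yields $a\equiv 1\pmod N$ automatically, reproducing exactly $\tildeG(N)$. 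The main obstacle, and the only place where one has to be careful, is the moduli-theoretic bookkeeping in the middle step: one must verify that multiplication by $c\tau+d$ acts on torsion data in the direction claimed, and that $1+N$ really generates (and not merely lies in) the kernel of reduction modulo $N$, so that no larger congruence subgroup between $\tildeG(N)$ and $\G_0(N^2)$ sneaks in.
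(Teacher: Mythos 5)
Your proposal is correct and follows essentially the same route as the paper: identify the moduli-theoretic action of $t$ with the fractional linear action of the matrix $\mathbf{t}=\sm{1-N}{-1}{N^2}{N+1}$ and then describe the subgroup of $\SL_2(\Z)$ generated by $\G_1(N^2)$ and $\mathbf{t}$. In fact your argument via the reduction homomorphism $\G_0(N^2)\rightarrow(\Z\slash N^2\Z)^{\times}$, $\sm{a}{b}{c}{d}\mapsto d$, with $(1+N)^k\equiv 1+kN\ (\mathrm{mod}\ N^2)$, supplies an explicit justification of the step that the paper only asserts, namely that $\langr{\G_1(N^2),\mathbf{t}}$ equals $\tildeG(N)$, and your torsion-point bookkeeping makes precise the isomorphism check the paper performs by exhibiting $\tau'$ directly.
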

As a direct consequence of Proposition \ref{prop:WellDefModularCurve}, $X_1(N^2)\slash\langr{t}$ is a well-defined modular curve with a function field equal to
$$k(X_1(N^2)\slash\langr{t})=\{f\in k(X_1(N^2))\colon f(t(E,R))=f(E,R),\forall (E,R)\in S_1(N^2)\}.$$ The following proposition shows the relationship between the congruence subgroups $\tildeG(N)$ and $\G_1(N^2)$.

\begin{proposition}\label{prop:NormalSubgroup}
Let $\tildeG(N)$ be a congruence subgroup defined as $$\tildeG(N)=\left\{\sm{a}{b}{c}{d}\in \SL_2(\Z)\colon\modulo{c}{0}{N^2},\ \modulo{a,d}{1}{N}\right\}.$$ The congruence subgroup $$\G_1(N^2)=\{\sm{a}{b}{c}{d}\in\SL_2(\Z)\colon\modulo{\sm{a}{b}{c}{d}}{\sm{1}{*}{0}{1}}{N^2}\}$$ is a normal subgroup of $\tildeG(N)$ with index $N$.
\end{proposition}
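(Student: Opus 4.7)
The plan is to exhibit an explicit surjective group homomorphism $\phi\colon \tildeG(N)\to \Z/N\Z$ whose kernel is $\G_1(N^2)$. This establishes normality and computes the index in one stroke.

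The inclusion $\G_1(N^2)\subset \tildeG(N)$ is immediate from the definitions: any matrix in $\G_1(N^2)$ satisfies $a\equiv d\equiv 1\pmod{N^2}$ and $c\equiv 0\pmod{N^2}$, which in particular gives $a,d\equiv 1\pmod{N}$. Since every element of $\tildeG(N)$ has $a\equiv 1\pmod{N}$, the integer $(a-1)/N$ is well defined, and I would set
$$\phi\colon \tildeG(N)\longrightarrow \Z/N\Z,\qquad \sm{a}{b}{c}{d}\longmapsto \tfrac{a-1}{N}\bmod N.$$

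To verify that $\phi$ is a group homomorphism, I would compute the $(1,1)$-entry of a product: for $M_i=\sm{a_i}{b_i}{c_i}{d_i}\in\tildeG(N)$, the top-left entry of $M_1M_2$ is $a_1a_2+b_1c_2$. Because $c_2\equiv 0\pmod{N^2}$, the term $b_1c_2$ vanishes modulo $N^2$. Writing $a_i=1+Nk_i$ one then obtains $a_1a_2\equiv 1+N(k_1+k_2)\pmod{N^2}$, and hence $\phi(M_1M_2)=\phi(M_1)+\phi(M_2)$. Next, I would identify the kernel: if $\phi(M)=0$, then $a\equiv 1\pmod{N^2}$; combined with $c\equiv 0\pmod{N^2}$ and the determinant relation $ad-bc=1$, this forces $ad\equiv 1\pmod{N^2}$ and therefore $d\equiv 1\pmod{N^2}$. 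Hence $\ker\phi=\G_1(N^2)$, which in particular yields normality.

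Finally, for surjectivity I would invoke the matrix $\mathbf{t}=\sm{1-N}{-1}{N^2}{1+N}$ constructed in the proof of Proposition~\ref{prop:WellDefModularCurve}; it lies in $\tildeG(N)$ and satisfies $\phi(\mathbf{t})=-1\bmod N$, which generates $\Z/N\Z$. The first isomorphism theorem then gives $[\tildeG(N):\G_1(N^2)]=N$.

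The only delicate point is the homomorphism check: one must see that the cross-term $b_1c_2$ contributes nothing modulo $N^2$, and this genuinely uses the stronger congruence $c\equiv 0\pmod{N^2}$ built into $\tildeG(N)$---the weaker condition $c\equiv 0\pmod{N}$ would not suffice, so the definition of $\tildeG(N)$ is exactly tuned for this map to land in $\Z/N\Z$. Everything else is routine congruence manipulation.
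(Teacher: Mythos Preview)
Your proof is correct and takes a genuinely different route from the paper. The paper argues in two separate steps: it first verifies normality by an explicit conjugation, computing $\mathbf{t}^{-1}\sm{a}{b}{c}{d}\mathbf{t}$ for a general element of $\G_1(N^2)$ and checking the result lies in $\G_1(N^2)$ (using that $\tildeG(N)$ is generated by $\G_1(N^2)$ and $\mathbf{t}$); it then computes the index by counting the image of $\tildeG(N)$ under reduction modulo $N^2$, finding $\#\widehat{\tildeG(N)}=N^3$, and dividing by $[\G_1(N^2):\G(N^2)]=N^2$.

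Your approach is cleaner: the single homomorphism $\phi$ delivers normality, the index, and---as a bonus the paper does not state explicitly---the isomorphism $\tildeG(N)/\G_1(N^2)\cong \Z/N\Z$, which is in fact the structure used later when identifying this quotient with the cyclic group $\langle t\rangle$. The paper's argument, on the other hand, requires no insight to find the right map; it is pure computation once one knows $\tildeG(N)=\langle \G_1(N^2),\mathbf{t}\rangle$. Your observation that the homomorphism check genuinely needs $c\equiv 0\pmod{N^2}$ rather than merely $\pmod{N}$ is a nice conceptual point absent from the paper's treatment.
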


\begin{proof}
The congruence subgroup $\tildeG(N)$ is generated with the congruence subgroup $\G_1(N^2)$ and matrix $\mathbf{t}=\sm{1-N}{-1}{N^2}{1+N}\in \G_0(N^2)\setminus \G_1(N^2)$. To prove that $\G_1(N^2)$ is a normal subgroup of $\tildeG(N)$ it is enough to see that $\mathbf{t}^{-1}\sm{a}{b}{c}{d}\mathbf{t}\in\G_1(N^2),$ for every matrix $\sm{a}{b}{c}{d}\in\G_1(N^2)$. This is true because
\begin{align*}
\sm{1-N}{-1}{N^2}{1+N}&\sm{a}{b}{c}{d}\sm{1+N}{1}{-N^2}{1-N}= \\
&=\sm{a(1 - N)(N + 1) + c(1 - N) + N^2 (b (N + 1) + d)}
{-a (N + 1) +  b(N + 1)(N + 1) + d(N + 1) - c}
{c(1 - N)^2  - a N^2(1 - N) + N^2 (d (1 - N) - b N^2)}
{a N^2 +  d (1 - N)(N + 1) - b N^2(N + 1) - c (1 - N))}  \\
&\equiv\sm{1}{2(N+1)+b(2N+1)}{0}{1}{\ (\text{mod}\ N^2)}.
\end{align*}
To calculate the index of $\G_1(N^2)$ in $\tildeG(N)$ we will use the homomorphism \mbox{$\pi_N\colon\SL(\Z)\rightarrow\SL(\zNz)$,} induced by the reduction modulo $N$ for $N\geq 1$. The kernel of $\pi_N$ is the principal congruence subgroup $\G(N)$, which is a normal subgroup of finite index in $\SL(\Z)$. Any other congruence subgroup $\G(N)\subset\tildeG$ is of finite index in $\SL(\Z)$ and it is a preimage of $\pi_N$, i.e. $\tildeG=\pi_N^{-1}(\widehat{\G})$ where $\widehat{\G}$ is some subgroup of $\SL(\zNz).$ The index $[\tildeG\colon\G(N)]$ is equal to $\#\widehat{\G}.$ 

For $\#\widehat{\tildeG(N)}$, after reducing elements of $\tildeG(N)$ modulo $N^2$, the conditions on elements are $c=0, \modulo{a,d}{1}{N}$ and $a,b,c,d \in\Z\slash N^2\Z$. There are no conditions on $b,$ but $a$ and $d$ must satisfy a condition for determinant $\modulo{ad}{1}{N^2}.$ Writing $a=1+kN$ and $d=1+lN,$ where $k,l\in\{0,1,\dots,N-1\},$ we get
$$(1+kN)(1+lN)= \modulo{1+N(k+l)+klN^2}{1}{N^2},$$ which implies $\modulo{k+l}{0}{N},$ so $l$ depends completely on $k$. Therefore, $d$ depends completely on $a$. Altogether, $\#\widehat{\tildeG(N)}=N^3.$ The index $[\tildeG(N)\colon\G(N^2)]$ is equal to $[\tildeG(N)\colon\G_1(N^2)][\G_1(N^2)\colon\G(N^2)],$ thus
\begin{equation*}
[\tildeG(N)\colon\G_1(N^2)]=\frac{\#\widehat{\tildeG(N)}}{[\G_1(N^2)\colon\G(N^2)]}=\frac{\#\widehat{\tildeG(N)}}{N^2}=\frac{N^3}{N^2}=N.
\end{equation*}
\end{proof}

By performing a calculation similar to the one used in the proof of Proposition \ref{prop:NormalSubgroup}, it can be shown that the index of $[\G_1(N)\colon\G_1(N^2)]$ is equal to $N^2.$ Let $k(X_1(N^2))$ denote the function field corresponding to the modular curve $X_1(N^2)$. Using the results of the Proposition \ref{prop:NormalSubgroup}, the quotient $\tildeG(N)\slash\G_1(N^2)$ acts as a group of automorphism of $k(X_1(N^2))$ with fixed field $k(X(\tildeG(N))),$ i.e. 
$$k(X(\tildeG(N)))=k(X_1(N^2))^{\tildeG(N)\slash\G_1(N^2)}.$$
This gives us an equality of function fields: $$k(X(\tildeG(N))) = k(X_1(N^2))^{\mathbf{t}}.$$
So we have $$k(X(\tildeG(N)))=k(X_1(N^2)\slash\langr{t}).$$

We have shown that the function $b\in \pi_{1,N}^{*}(k(X_1(N)))$ is invariant under the operator $t.$ Therefore, $$\pi_{1,N}^{*}(k(X_1(N)))\ _\subset^N\ k(X(\tildeG(N)))=k(X_1(N^2)\slash\langr{t}),$$
where the degree of the extension is equal to the index of the subgroup. Returning to the equality (\ref{eqn:todoFieldEqForN}), the modular function $g\colon (E,R)\mapsto g_{N,P}(-R)$ is an element of the field $k(X_1(N^2)\slash\langr{t})$ with property $g^N=f$. The polynomial $x^N-f$ is a polynomial of degree $N$ in $\pi_{1,N}^{*}(k(X_1(N)))[x]$ having $g$ as a root. The equality (\ref{eqn:todoFieldEqForN}) depends on the irreducibility of the polynomial $x^N-f.$

\begin{lemma}\label{lemma:IrrPoly}
Let $f$ be a function defined on the set $S_1(N)$ with $(E,P)\mapsto f_{N,P}(-P),$ where $f_{N,P}$ is a normalized Miller function. Let $g$ be a function defined on the set $S_1(N^2)$ with $(E,R)\mapsto g_{N,P}(-R),$ where $P=[N]R$ and $f_{N,P}\circ [N]=g_{N,P}^N.$ Let $t\in Gal(k(X_1(N^2))\slash k(X_1(N)))$ be an operator defined as $t(E,R)=(E,[N+1]R),\ (E,R)\in S_1(N^2).$ Let $\pi_{1,N}^{*}\colon k(X_1(N))\xrightarrow{} k(X_1(N^2)$ be a pullback operator defined as $\pi_{1,N}((E,R))=(E,[N]R).$ Then, the polynomial $x^N-f$ is an irreducible polynomial in $\pi_{1,N}^{*}(k(X_1(N)))[x].$ 
\begin{proof}
We will show that the field extension $\pi_{1,N}^{*}(k(X_1(N)))(g)$ has degree $N$ over $k(X_1(N)),$ i.e. that the function $g$ is only invariant under the operator $t,$ thus it is an element of the function field $k(X(\tildeG(N))),$ and cannot be an element of some other field $k(X({\G}))$ with $\tildeG(N)\subsetneqq\G\subset\G_1(N)$ and $g\in k(X({\G})).$ 

Assume then that $g$ is invariant under another operator $T\in Gal(k(X_1(N^2))\slash k(X_1(N)))$ such that $T(E,R)=(E,T(R)),\ (E,R)\in S_1(N^2),$ and where $NT(R)=[N]R=P.$ The invariant property of the function $g$, together with the previously defined Weil pairing implies:
\begin{align*}
1=\frac{g_{N,[N]R}(-T(R))}{g_{N,[N]R}(-R)}=\frac{g_{N,P}((R-T(R))-R)}{g_{N,P}(-R)}=e_N(P,R-T(R)).
\end{align*}
The point $R-T(R)$ belongs to $E[N]$ because, by assuming $NT(R)=[N]R=P,$ we have $N(R-T(R))=P-P=\calO.$ Therefore, $e_N(P,R-T(R))$ is consistent with the definition of Weil pairing. From this, for every $(E,R)\in S_1(N^2),$ we have $e_N(P,R-T(R))=1.$ 

Let $E$ be a fixed elliptic curve and $P$ be a point of order $N$ on that curve such that $P=[N]R$. Since the Weil pairing is non-degenerate, and $e_N(P,R-T(R))=1$ for every $R$, it follows that the point $R-T(R)$ belongs to the subgroup $\langr{P}$. As a consequence, the point $T(R)$ can be written as $R+[l]P$ for some $l\in\Z$, which depends on $R$.

In comparison to the operator $t,$ since $g$ is invariant under $t,$ we have: 
$$g(E,R)=g(t(E,R))=g(E,[N+1]R)=g(E,R+P),$$ which implies $g(E,R)=g(E,R+[k]P)$ for every $k\in\Z.$ For the operator $T,$ we have: $$g(E,R)=g(E,T(R))=g(E,R+[l]P),$$ for some $l\in\Z.$ Therefore, the invariant property of the function $g$ under the operator $T$ follows from the invariant property of the function $g$ under the operator $t,$ which means that $g$ is modular only for the congruence subgroup $\tildeG(N).$ This implies that the function field $\pi_{1,N}^{*}(k(X_1(N)))(g)$ is an extension of degree exactly $N$ over $k(X_1(N)).$ The roots of the polynomial $x^N-f$ are of the form $\zeta_N^ng,$ where $\zeta_N$ represents the $N$-th root of unity and $n$ is a positive integer. If we assume that this polynomial is not irreducible, then we could find two non-constant polynomials $f_1,f_2\in k(X_1(N))[x],$ such that $x^N-f=f_1(x)f_2(x).$ However, this would lead to a contradiction since $g$ is a root for $f_1$ and has degree greater than or equal to $N$, which is the degree of $g$. Therefore, the polynomial $x^N-f$ is irreducible.

\end{proof}
\end{lemma}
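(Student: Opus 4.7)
The plan is to show that $\pi_{1,N}^{*}(k(X_1(N)))(g)$ has degree exactly $N$ over $\pi_{1,N}^{*}(k(X_1(N)))$. Since $g$ is a root of the monic degree-$N$ polynomial $x^N - f \in \pi_{1,N}^{*}(k(X_1(N)))[x]$, establishing this degree equality forces $x^N - f$ to be the minimal polynomial of $g$, hence irreducible.

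First, I would use the machinery already set up. The discussion preceding the lemma shows that $g$ is invariant under the operator $t$, so $g \in k(X(\tildeG(N)))$. By Proposition \ref{prop:NormalSubgroup}, this field is an extension of degree $N$ over $\pi_{1,N}^{*}(k(X_1(N)))$. Thus $[\pi_{1,N}^{*}(k(X_1(N)))(g) : \pi_{1,N}^{*}(k(X_1(N)))]$ is a divisor of $N$, and it remains to rule out a proper divisor by showing that $g$ is invariant under \emph{no} automorphism beyond the powers of $t$.

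The main step is to run the following Weil-pairing argument. Suppose $T$ is an automorphism over $\pi_{1,N}^{*}(k(X_1(N)))$ of the form $T(E,R)=(E,T(R))$; the requirement that $T$ lies over $\pi_{1,N}^{*}(k(X_1(N)))$ forces $[N]T(R)=[N]R=P$, so $R-T(R)\in E[N]$ and the Weil pairing $e_N(P, R-T(R))$ is well defined. Assuming $g(E,T(R))=g(E,R)$ and rewriting with the translation formula $e_N(S,P)=g_{N,P}(X+S)/g_{N,P}(X)$ from \cite[Chapter~III.8]{silverman2009arithmetic} gives
$$1=\frac{g_{N,P}(-T(R))}{g_{N,P}(-R)}=\frac{g_{N,P}(-R+(R-T(R)))}{g_{N,P}(-R)}=e_N(P,\,R-T(R)).$$
Letting $(E,P)$ range and varying $R$, non-degeneracy of $e_N$ on $E[N]$ forces $R-T(R)\in\langr{P}$, i.e.\ $T(R)=R+[l]P$ for some $l\in\Z$, which is exactly the action of $t^l$. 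Hence the stabilizer of $g$ is exactly $\langr{t}$, and $\pi_{1,N}^{*}(k(X_1(N)))(g)=k(X(\tildeG(N)))$, of degree $N$.

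Finally, I would wrap up by a standard argument: the roots of $x^N-f$ over an algebraic closure are $\zeta_N^n g$, so any non-trivial factorization $x^N-f=f_1 f_2$ in $\pi_{1,N}^{*}(k(X_1(N)))[x]$ would produce a polynomial of degree strictly less than $N$ with $g$ as a root, contradicting the minimal-polynomial degree just computed. The main obstacle I anticipate is the Weil-pairing identification in the second step: one must correctly invoke the defining translation formula for $e_N$ (rather than the symbol-chase suggested by the bilinearity/alternating shorthand), and one must justify that invariance of $g$ as a function on $S_1(N^2)$ really does produce the pointwise pairing identity for every choice of $(E,R)$, so that non-degeneracy can be applied on each fiber of $\pi_{1,N}$.
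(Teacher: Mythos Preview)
Your proposal is correct and follows essentially the same approach as the paper's own proof: both show that the stabilizer of $g$ in $\mathrm{Gal}(k(X_1(N^2))/\pi_{1,N}^{*}(k(X_1(N))))$ is exactly $\langr{t}$ via the Weil-pairing translation identity $e_N(P,R-T(R))=g_{N,P}(-T(R))/g_{N,P}(-R)$ and non-degeneracy, conclude that $[\pi_{1,N}^{*}(k(X_1(N)))(g):\pi_{1,N}^{*}(k(X_1(N)))]=N$, and then finish with the standard minimal-polynomial/factorization argument. Your anticipated obstacle about invoking the translation formula for $e_N$ is exactly the point the paper glosses over, so your version is if anything slightly more careful there.
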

In conclusion, the irreducibility of the polynomial $x^N-f,$ as stated in Lemma \ref{lemma:IrrPoly}, implies
$$\pi_{1,N}^{*}(k(X_1(N)))(g)=k(X_1(N^2)\slash\langr{t}),$$ which means $b'$ is an element of $\pi_{1,N}^{*}(k(X_1(N)))(g).$ Therefore, equality (\ref{eqn:todoFieldEqForN}) holds, and it is possible to generalize radical isogenies using modular functions.

\begin{example}
Let $N=5$ and $E$ be an elliptic curve over the field 
$$\Q_5(b,c):=\text{Frac}\ \frac{\Q[b,c]}{(F_5(b,c))}.$$
Tate normal form for $E$, together with the point $P$ of order $5$ is 
\begin{equation}\label{eqn:TateNormalFormDef5}
E\colon y^2+(1-b)xy-by=x^3-bx^2, \ P=(0,0).
\end{equation}
In general, polynomial $F_N(b,c)\in\Z[b,c]$ is an irreducible polynomial calculated from scalar multiples of the point $P.$ When $N\geq 4$, condition $F_N(b,c)=0$ together with $F_m(b,c)\neq 0,$ when $4\leq m<N$, and determinant of $E$ not equal to zero, ensures that the point $P$ is of order $N$. Other direction is also true, when $P$ is of order $N,$ then $F_N(b,c)=0.$ Additionally, $F_N(b,c)$ is a defining polynomial for the modular curve $X_1(N),$ so $\Q_N(b,c)$ is a function field of $X_1(N)$ over $\Q$. More details are available in \cite{streng2015generators}. 

In the case of $N=5$ we have $F_5(b,c)=b-c=0$, which implies a simpler Tate normal form (\ref{eqn:TateNormalFormDef5}). Having only a parameter $b$ results in only one modular function $\mathbf{b}$ in $k(X_1(5))$. On the other side, the curve $E'$ and the point $P'$ of order $5$ are given by $$E'=E\slash\langr{P}\colon y^2+(1-b')xy-b'y=x^3-b'x^2, \ P'=(0,0).$$ For a point $P$, let $R$ be a point of order $25$ such that $[5]R=P$. The pair $(E,R)$ is an enhanced elliptic curve for $\G_1(25).$ The pullbacks $\pi_{1,5},\pi_{2,5}$ and maps $b,b'$ are defined as before.

From the example in \cite[Section~4]{castryck2020radical}, when $N=5$, $f_{5,P}(-P)=b\in\Q_5(b).$ The fifth root of $b$ is a function on $X_1(25),$ as $(E,R)\xmapsto{g} g_{5,[5]R}(-R)$ is a well-defined map with a property $g^5=b$. 
 
Observing the preimages of $\pi_{2,5},$ points $(E,R), (E, R+[1\cdot 5]R), (E,R+[2\cdot 5]R),(E,R+[3\cdot 5]R)$ and $(E,R+[4\cdot 5]R)$ are all mapped to the same final point. The operator $t$ defined as $t(E,R)\mapsto(E,[5+1]R)=(E,[6]R)$ is of order $5$ and $\langr{t}$ is isomorphic to $\Z\slash 5\Z.$ The congruence subgroup generated by $\G_1(25)$ and matrix $\mathbf{t}=\sm{-4}{-1}{25}{6}$ is 
$$\tildeG(5)=\left\{\sm{\tilde{a}}{\tilde{b}}{\tilde{c}}{\tilde{d}}\in \SL_2(\Z)\colon\modulo{\tilde{c}}{0}{25},\ \modulo{\tilde{a},\tilde{d}}{1}{5}\right\}.$$  
Functions $b, b', g$ and every pullback by $\pi_{1,5}$ or $\pi_{2,5}$ are invariant under $t$, so they are also defined on the quotient $X_1(25)\slash\langr{t}$.
For the number of elements in group $\#\widehat{\tildeG(5)}$, after reducing elements of $\tildeG(5)$ modulo $25$, conditions on elements are $\tilde{c}=0,\modulo{\tilde{a},\tilde{d}}{1}{5}$ and $\tilde{a},\tilde{b},\tilde{c},\tilde{d} \in\Z\slash 25\Z$. The only possibilities for $\tilde{a}$ and $\tilde{d}$ are from the set $\{1,6,11,16,21\}.$ Since the determinant of the matrix has to be $1$ in $\SL(\Z\slash 25\Z),$ there are $25$ possibilities for $\tilde{b}$. Therefore, there are $125$ elements in this group, and the index $[\tildeG(5)\colon\G_1(25)]=5.$
The field extension $\pi_{1,5}^{*}(k(X_1(5)))\subset k(X_1(25)\slash\langr{t})$ has degree $5$, polynomial $X^5-b$ is irreducible in $\pi_{1,5}^{*}(k(X_1(5))),$ has a well-defined root, thus $$\pi_{1,5}^{*}(k(X_1(5)))(\sqrt[5]{b})=k(X_1(25)\slash\langr{t}),$$ meaning $b'\in \pi_{1,5}^{*}(k(X_1(5)))(\sqrt[5]{b})$ and $b'$ is a rational expression of $\sqrt[5]{b}.$
\end{example}

%%%---SECTION---%%%
\section{Extending to $X_0(N)$}\label{sec:RadicalToModularX0}
Continuing from the setting of the previous section, the discussion for $\G_1(N)$, $X_1(N)$ and $S_1(N)$ can be extended to $\G_0(N),X_0(N)$ and $S_0(N)$. Let $\boldsymbol{\beta}$ be a function on enhanced elliptic curves for $\G_0(N)$, i.e. an element of $k(X_0(N)).$ For example, we can take $\boldsymbol{\beta}$ to be Hauptmodul\footnote{A Hauptmodul for a congruence subgroup $\G$ is a function that generates the field of modular functions for $\G$.} for $k(X_0(N)).$ Such Hauptmodul will exist if the genus of the modular curve is zero. Pullback operators $\pi_{1,N}$ and $\pi_{2,N}$ are defined as in the previous section, and $\psi_N$ is a pullback operator defined by
\begin{align*}
\psi_N^{*}\colon k(X_0(N))\xrightarrow{} k(X_1(N)),\ \psi_N((E,P))=(E,\langr{P}).
\end{align*}
Applying the compositions $\pi^{*}_{1,N}\circ\psi^{*}_N$ and $\pi^{*}_{2,N}\circ\psi^{*}_N$ to functions from $k(X_0(N))$ results in elements of $k(X_1(N^2)).$ From now on, we will identify the function $\boldsymbol{\beta}$ with $\beta\coloneqq\pi_{1,N}^{*}(\psi_N^{*}(\boldsymbol{\beta}))$ and define $\beta'\coloneqq\pi_{2,N}^{*}(\psi_N^{*}(\boldsymbol{\beta})).$ Both $\beta$ and $\beta'$ are elements of $k(X_1(N^2)).$ Maps and connections are visible in Figure \ref{fig:mapsX0Connections}.
\begin{figure}
\begin{tikzcd}
	  & {X_1(N^2), \G_1(N^2)} \arrow[r, squiggly, no head] 
	  			& {(E,R)} \arrow[d, "\pi_{1,N}" left] \arrow[rd, "\pi_{2,N}" right] & \\	
      & {X_1(N), \G_1(N)} \arrow[r, squiggly, no head] 
      			& {(E,[N]R)} \arrow[d,equal] \arrow[r] 	& {(E\slash{\langr{[N]R}},R+\langr{[N]R})} \arrow[d,equal] \\
      & {} 		& {(E,P)} \arrow[d, "\psi_N" left]	& {(E',P')} \arrow[d, "\psi_N" left] \\
      & {X_0(N), \G_0(N)} \arrow[r, squiggly, no head]
      			& {(E,\langr{P})} \arrow[d, "\boldsymbol\beta" left]	& {(E',\langr{P'})} \arrow[d, "\boldsymbol\beta" left] \\
      &   {}	& \beta(E,R)  & \beta'(E,R)                      
\end{tikzcd}
\caption{Maps on enhanced elliptic curves, including $X_0(N)$}
\label{fig:mapsX0Connections}
\end{figure}

Because $\beta'$ is defined as pullback by $\pi_2,$ as before, it is invariant under the operator $t$, which implies $\beta'\in k(X(\tildeG(N))).$ Similarly to the previous section, if radical isogeny formulas exist on $X_0(N)$ it should be possible to express $\beta'$ as an element of some function field depending on $\beta.$ To this end, we are interested in preimages of $(E,P),$ now under the maps $\pi^{*}_{1,N}(\psi^{*}_N)$ and $\pi^{*}_{2,N}(\psi^{*}_N),$ i.e. pairs of enhanced elliptic curves for $\G_1(N^2),(E,R)$ and $(E,R')$ mapped to the same final points $(E,\langr{[N]R})$ and $(E\slash\langr{[N]R},\langr{R+\langr{[N]R}}).$ Moreover, to include functions on $X_0(N)$, maps (\ref{eqn:mapsX1Extb}) and (\ref{eqn:mapsX1Extb'}) are extended to
\begin{equation}\label{eqn:mapsExtX0}
\begin{aligned}
(E,R)&\xrightarrow{}(E,[N]R)=(E,P)\xrightarrow{}(E,\langr{P}) \\
(E,R)&\xrightarrow{}(E\slash\langr{[N]R},R+\langr{[N]R})=(E',P')\xrightarrow{}(E',\langr{P'}).
\end{aligned}
\end{equation}
Describing preimages of maps in (\ref{eqn:mapsExtX0}) will result in another quotient of $X_1(N^2)$ where the function $\beta'$ will be well-defined. If we add enhanced elliptic curves for $\G_0(N)$ in maps (\ref{eqn:mapsExtX0}), i.e. maps $(E,P)\xrightarrow{}(E,\langr{P})$ and $(E',P')\xrightarrow{}(E',\langr{P'}),$ we obtain additional conditions on those preimages. Consequently, $\beta'$ will belong to a smaller function field $k(X(\G')),$ for some congruence subgroup $\G'$ satisfying $\tildeG(N)\subset\G'.$ The groups that describe the preimages and their connections to the function fields are provided in the following lemma.

%%%---LEMMA---%%%
\begin{lemma}\label{lemma:MainLemma}
Let $N\geq 5$ be a positive integer. Group $G,$ defined as a semidirect product 
$$G=(\zNz)^{2}\rtimes_{\widehat{\varphi}}(\zNz)^{\times},$$ 
where for a triple $((g_1,g_1'), g_2)\in G$ we have $\widehat{\varphi}_{g_2}(g_1,g_1')=(g_2g_1,g_2g_1'),$ is isomorphic to Galois group of function field extension $k(X_1(N^2))\slash k(X_0(N)).$ In particular $k(X_0(N))=k(X_1(N^2))^G.$ \\
Let subgroup $H$ of $G$ be defined as $$H=(\zNz\times \{0\})\rtimes_{\widehat{\varphi}}(\zNz)^{\times},$$ and let $\pi_{1,N},\pi_{2,N},\psi_{N}$ be pullback operators defined by
\begin{align*}
\pi_{1,N}^{*} &\colon k(X_1(N))\xrightarrow{} k(X_1(N^2)),\ \pi_{1,N}((E,R))=(E,[N]R),\\
\pi_{2,N}^{*} &\colon k(X_1(N))\xrightarrow{} k(X_1(N^2)),\ \pi_{2,N}((E,R))=(E\slash \langr{[N]R},R+ \langr{[N]R}),\\
\psi_N^{*} &\colon k(X_0(N))\xrightarrow{} k(X_1(N)),\ \psi_N((E,P))=(E,\langr{P}).
\end{align*}
Functions from the set $\pi_{1,N}^{*}(\psi_N^{*}(k(X_0(N))))$ are invariant under the action of group $G$ and functions from the set $\pi_{2,N}^{*}(\psi_N^{*}(k(X_0(N))))$ are invariant under the action of subgroup $H.$

\begin{proof}
Let $E$ be an elliptic curve over the field $k$ and $P$ a point of order $N$ on that curve. Let $R$ and $R'$ be points of order $N^2$ on curve $E$ and $R$ such that $P=[N]R.$ Pair $(E,R)$ is an enhanced elliptic curve for $\G_1(N^2).$ We are interested in preimages of composition $\pi_{1,N}^{*}\circ\psi_N^{*},$ i.e. in map $(E,R)\mapsto (E,\langr{[N]R}).$ Different $R$ and $R'$ are mapped to the same point if $\langr{[N]R}=\langr{[N]R'},$ so there exists $k\in\N$ such that $[kN]R=[N]R'.$ Because $R'$ is a point of order $N^2$ and $[k]P=[N]R'$ it follows that $\text{gcd}(N,k)=1.$ Altogether, points
\begin{equation*}\label{eqn:pointsForG}
R'=[k]R+\overline{P},\ \text{where}\ \overline{P}\in E[N]\ \text{and}\ k\in\N,\ \text{gcd}(k,N)=1,
\end{equation*}
are mapped by $(E,R)\mapsto (E,\langr{[N]R})$ to the same final point. Number of preimages of this type is $N^2\varphi(N).$\footnote{Throughout the proof $\varphi$ denotes Euler totient function.}

Define $G_1\coloneqq(\zNz)^{2}$ and $G_2\coloneqq(\zNz)^{\times}.$ Let torsion group $E[N]$ be generated by the basis $\langr{P_1,P_2},$ so a point $\overline{P}\in E[N]$ can be expressed as $\overline{P}=[a]P_1+[b]P_2$ for some $a,b\in\zNz.$ Point $R'$ is equal to $[k]R+[a]P_1+[b]P_2.$ We define action of the triple $(a,b,k)\in G_1\rtimes_{\widehat{\varphi}}G_2$ on the point $R,$ i.e. on the set of preimages, with 
\begin{equation}\label{eqb:actionOfG}
(a,b,k)R\mapsto [k]R+[a]P_1+[b]P_2.
\end{equation}

This is a well-defined action, because for two such triples $(a_1,b_1,k_1),(a_2,b_2,k_2),$ we have: 
\begin{align*}
(a_1,b_1,k_1)\circ(a_2,b_2,k_2)R
	&=(a_1,b_1,k_1)([k_2]R+[a_2]P_1+[b_2]P_2) \\
	&=[k_1]([k_2]R+[a_2]P_1+[b_2]P_2)+[a_1]P_1+[b_1]P_2 \\
	&=[k_1k_2]R+[k_1a_2+a_1]P_1+[k_1b_2+b_1]P_2 \\
	&=(a_1+k_1a_2,b_1+k_1b_2,k_1k_2)R.
\end{align*}
Let $G=G_1\rtimes_{\widehat{\varphi}}G_2$ and $\widehat{\varphi}_{k}(a,b)=(ka,kb),\ a,b\in G_1, k\in G_2.$ We have identified functions from $k(X_0(N))$ with their double pullbacks first by $\psi_N$ and then by $\pi_{1,N}.$ More generally, a function field $k(X_0(N))$ was identified with $\pi_{1,N}^{*}(\psi_{N}^{*}(k(X_0(N))).$ Set of preimages of functions in $\pi_{1,N}^{*}(\psi_N^{*}(k(X_0(N))))$ is invariant under the action (\ref{eqb:actionOfG}) of group $G$ which implies $k(X_0(N))=k(X_1(N^2))^G.$

For $H,$ we are interested in the preimages of composition $\pi_{2,N}^{*}\circ\psi_N^{*},$ i.e. in map $(E,R)\mapsto(E\slash\langr{[N]R}, \langr{R+\langr{[N]R}}).$ As before, for the condition $\langr{[N]R}=\langr{[N]R'}$ there exists $\hat{h}\in\N,\ \text{gcd}(\hat{h},N)=1$ such that $R'=[\hat{h}]R+\overline{P},$ for some $\overline{P}\in E[N]$. When $R$ and $R'$ are satisfying this, second condition becomes $\langr{R+\langr{[N]R}}=\langr{R'+\langr{[N]R}},$ i.e. $\langr{R+\langr{P}}=\langr{R'+\langr{P}}.$ Now, there exists $\hat{j},\hat{s}$ such that $[\hat{j}]R-R'=[\hat{s}]P.$ Combining everything together, $[\hat{j}]R-[\hat{h}]R-\overline{P}=[\hat{s}]P,$ and $[\hat{j}-\hat{h}]R=[\hat{s}]P+\overline{P}.$ Right side of this equality is a point of order dividing $N$, so $N|(\hat{j}-\hat{h})$ and there exist $\hat{t}$ such that $\hat{j}-\hat{h}=N\hat{t}.$ Now, $[\hat{t}]P=[\hat{s}]P+\overline{P}$ meaning $\overline{P}\in\langr{P}.$ Altogether, points of the form 
\begin{equation*}\label{eqn:pointsForH}
 R'=[h]R+\overline{P},\ \text{where}\ \overline{P}\in\langr{P}\ \text{and}\ h\in\N,\ \text{gcd}(h,N)=1
\end{equation*}
are mapped by $(E,R)\mapsto(E\slash\langr{[N]R}, \langr{R+\langr{[N]R}})$ to the same final point. Number of preimages of this type is $N\varphi(N).$ The difference here is that we are not working with the whole torsion group $E[N],$ but with subgroup generated by a point $P$ of order $N.$ Using a analogous calculation as for $G,$ functions in $\pi_{2,N}^{*}(\psi_N^{*}(k(X_0(N))))$ are invariant under the action of subgroup ${H=(\zNz\times \{0\})\rtimes_{\widehat{\varphi}}(\zNz)^{\times}.}$
\end{proof}
\end{lemma}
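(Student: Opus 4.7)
The plan is to prove the lemma by a direct fiber analysis on enhanced elliptic curves, translating each pullback-invariance into transitivity of an explicit group action on fibers. I would fix $(E,R)\in S_1(N^2)$ with $P=[N]R$ and a $\zNz$-basis $P_1,P_2$ of $E[N]$, so that any $N$-torsion point is $[a]P_1+[b]P_2$. The whole argument then reduces to determining, for each of the two geometric compositions $\psi_N\circ\pi_{1,N}$ and $\psi_N\circ\pi_{2,N}$, exactly which perturbations of $R$ produce the same image.

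For $\psi_N\circ\pi_{1,N}\colon (E,R)\mapsto (E,\langr{[N]R})$, the condition $\langr{[N]R}=\langr{[N]R'}$ gives $[N]R'=[k]P$ with $\gcd(k,N)=1$, hence $R'=[k]R+[a]P_1+[b]P_2$ for a uniquely determined triple $(a,b,k)\in(\zNz)^2\times(\zNz)^{\times}$. This fiber has cardinality $N^2\varphi(N)=|G|$. Writing the action (\ref{eqb:actionOfG}) of $(a,b,k)$ on $R$ and composing two such triples, as in the calculation already displayed in the lemma statement, recovers exactly the semidirect product law with $\widehat{\varphi}_k(a,b)=(ka,kb)$; thus $G$ acts freely and transitively on the fiber, and any pullback of a function from $k(X_0(N))$ is $G$-invariant. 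For $\psi_N\circ\pi_{2,N}$, I would start from $R'=[h]R+\bar P$ satisfying the first condition and impose the additional constraint $\langr{R+\langr{P}}=\langr{R'+\langr{P}}$ inside $E/\langr{P}$: this produces some $j$ with $[j]R-R'\in\langr{P}$, and substituting gives $[j-h]R=[s]P+\bar P\in E[N]$, which forces $N\mid(j-h)$ and then $\bar P\in\langr{P}$. In the chosen basis this is exactly $b=0$, so the stabilizing action restricts to $H=(\zNz\times\{0\})\rtimes_{\widehat{\varphi}}(\zNz)^{\times}$, of order $N\varphi(N)$, matching the fiber size.

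For the identification $G\cong \text{Gal}(k(X_1(N^2))/k(X_0(N)))$ and the fixed-field statement $k(X_0(N))=k(X_1(N^2))^G$, I would combine the $G$-invariance above with the degree of the covering $X_1(N^2)\to X_0(N)$, which equals $[\G_0(N)\colon\pm\G_1(N^2)]=N^2\varphi(N)=|G|$ by an index computation in the spirit of Proposition \ref{prop:NormalSubgroup}. Since $G$ acts faithfully and $|G|$ already matches the degree of the extension, it must be the full Galois group. The main obstacle is the bookkeeping in the $\psi_N\circ\pi_{2,N}$ fiber analysis, namely chasing both conditions simultaneously to conclude $\bar P\in\langr{P}$ rather than only $\bar P\in E[N]$; the semidirect product verification and the index count are short routine calculations by comparison.
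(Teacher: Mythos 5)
Your fiber analysis is essentially the paper's own proof: the description of the preimages $R'=[k]R+\overline{P}$ with $\overline{P}\in E[N]$, $\gcd(k,N)=1$ for $\psi_N\circ\pi_{1,N}$, the refinement $\overline{P}\in\langr{P}$ for $\psi_N\circ\pi_{2,N}$, the counts $N^2\varphi(N)$ and $N\varphi(N)$, and the verification of the semidirect product law all coincide with what the paper does.

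The problem is the extra step you add to identify $G$ with the Galois group by comparing $|G|$ with the covering degree. First, the degree of $k(X_1(N^2))/k(X_0(N))$ is an index of images in $\PSL_2(\Z)$: since $-I\in\G_0(N)$ but $-I\notin\G_1(N^2)$, one has $[\G_0(N):\{\pm I\}\G_1(N^2)]=N^2\varphi(N)/2$, not $N^2\varphi(N)$ as you assert. Second, $G$ does not act faithfully on $S_1(N^2)$: the element $((0,0),-1)$ sends $(E,R)$ to $(E,-R)$, which is the same class via the automorphism $[-1]$ of $E$, so it acts trivially on points of $X_1(N^2)$ and hence on functions. Your action is simply transitive on the set of torsion points $R'$ (and it depends on the chosen basis $P_1,P_2$), but not on the fiber of $X_1(N^2)\to X_0(N)$, whose generic size is $N^2\varphi(N)/2$. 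So the argument "faithful action, $|G|$ equals the degree, hence $G$ is the full Galois group" fails on both inputs; at best the two factor-of-two discrepancies cancel to show that $|G/\{\pm1\}|$ matches the degree, and a degree count alone can never establish that the extension is Galois (equivalently, that $\{\pm I\}\G_1(N^2)$ is normal in $\G_0(N)$) — that is precisely the point that would need a separate argument. Note that the paper does not take this route: its proof passes from the invariance of the preimage sets directly to the fixed-field statement without any degree comparison, so if you want to justify the Galois-group identification more fully than the paper does, you must explicitly handle the $(E,R)\sim(E,-R)$ identification and the normality question rather than rely on matching $|G|$ against the covering degree.
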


Subgroup $H$ from Lemma \ref{lemma:MainLemma} can be used to define a function field $k'\coloneqq k(X_1(N^2))^H.$ Field $k'$ is an intermediate field $k(X_0(N))\subset k'\subset k(X_1(N^2))$ and a function field for some modular curve, so we can take $k'=k(X(\G')),$ where $\G'$ is a congruence subgroup and $X(\G')\coloneqq\G'\slash\calH.$ All functions from the set $\pi_{2,N}^{*}(\psi_N^{*}(k(X_0(N))))$ are well-defined on the quotient $X(\G')$ due to their invariant property under the action of $H$. From the construction above, $\G'$ is a subset of $\G_0(N)$ and from the calculated number of preimages, index $[\G_0(N):\G']=N.$ The congruence subgroup $\G'$ can be calculated similarly to the congruence subgroup $\tildeG(N)$ from the previous section. 

Using the setup and the proof of Lemma \ref{lemma:MainLemma} and the discussion above, we can prove the following theorem.

\begin{theorem}\label{thm:MainTheorem}
Let $H$ be a group $(\zNz\times \{0\})\rtimes_{\widehat{\varphi}}(\zNz)^{\times}.$ Let $k'$ be a  function field defined as $k'\coloneqq k(X_1(N^2))^H.$ Extension $k'\slash k(X_0(N))$ is not a Galois extension.
\begin{proof}
Let group $G$ and pullbacks $\pi_{1,N},\pi_{2,N},\psi_{N}$ be defined as in Lemma \ref{lemma:MainLemma}. As discussed above, $k'$ is by definition an intermediate field $k(X_0(N))\subset k'\subset k(X_1(N^2))$ and there exists a congruence subgroup $\G'$ such that  $k'=k(X(\G')).$ Working with function fields shown in Figure \ref{fig:groupsX0Connections}, to get radical isogeny formulas on $X_0(N),$ we need to find an $\alpha\in k(X_0(N))$ such that $k(X_0(N))(\sqrt[N]{\alpha})=k(X(\G')).$ Functions from $k(X_0(N))$ are identified with composition of pullbacks $\pi_{1,N}$ and $\psi_N,$ i.e. $\alpha$ should be an element of the field $\pi^{*}_{1,N}(\psi^{*}_N(k(X_0(N)))).$ If such $\alpha$ exists, field extension $k(X(\G'))\slash k(X_0(N))$ should be a cyclic extension of order $N,$ i.e. it should be a Galois extension. This implies that $H,$ a subgroup of index $N,$ should be a normal subgroup of $G.$

Points of type $R'=R+[l]P,\ l\in\N$ are mapped by $(E,R)\mapsto(E\slash\langr{[N]R},R+\langr{[N]R})$ to the same final point. Corresponding congruence subgroup describing preimages of this type was calculated in Proposition \ref{prop:WellDefModularCurve} and it is equal to $$\tildeG(N)=\left\{\sm{a}{b}{c}{d}\in \SL_2(\Z)\colon\modulo{c}{0}{N^2},\ \modulo{a,d}{1}{N}\right\}.$$ The index $[\tildeG(N)\colon\G_1(N^2)]$ is equal to $N.$ This, combined with the calculated number of preimages in the proof of Lemma \ref{lemma:MainLemma}, implies that $\tildeG(N)\subset\G'$ with index equal to $\varphi(N).$ Function $\beta'$ is an element of $k(X(\tildeG(N)))$ by definition and an element of $k(X(\G'))$ by construction. 

If $H$ is a normal subgroup, then for every $g\in G$ and every $h\in H$ there should exist some $h'\in H$ such that $ghg^{-1}=h'.$ Let $g=((g_1,g_2),k_1)\in G$ and $h=((h_1,0),k_2)\in H$. Using $g$ and $h$,
\begin{align*}
ghg^{-1}
&=((g_1,g_2),k_1)((h_1,0),k_2)((g_1,g_2),k_1)^{-1} \\
&=((g_1,g_2),k_1)((h_1,0),k_2)(\widehat{\varphi}_{k_1^{-1}}((g_1,g_2)^{-1}),k_1^{-1}) \\
&=((g_1,g_2)\widehat{\varphi}_{k_1}(h_1,0),k_1k_2)(\widehat{\varphi}_{k_1^{-1}}(-g_1,-g_2),k_1^{-1}) \\
&= ((g_1+k_1h_1,g_2+k_1\cdot 0),k_1k_2)((-k_1^{-1}g_1,-k_1^{-1}g_2),k_1^{-1}) \\
&= ((g_1+k_1h_1-k_2g_1,g_2-k_2g_2),k_2).
\end{align*}    
For this product to be in $H$, $g_2-k_2g_2$ should be equal to $0,$ for every $k_2\in(\zNz)^{\times}$ and every $g_2\in\zNz.$ Let $g_2$ be a generator for $\zNz,$ for example, take $g_2=1.$ Then, for every $k_2\in(\zNz)^{\times}, k_2\neq 1$ we have $k_2g_2=k_2\cdot 1=k_2\neq 1=g_2.$ To conclude, $H$ is not a normal subgroup of $G.$ 
\end{proof}

\end{theorem}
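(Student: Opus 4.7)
The plan is to reduce the question to a purely group-theoretic statement via the Galois correspondence, and then to refute normality by a direct conjugation in the semidirect product $G = (\zNz)^2 \rtimes_{\widehat{\varphi}} (\zNz)^\times$.

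First, I would invoke Lemma \ref{lemma:MainLemma}, which gives $k(X_0(N)) = k(X_1(N^2))^G$, so that $k(X_1(N^2))/k(X_0(N))$ is a finite Galois extension with Galois group $G$. Since $H \subseteq G$ by construction and $k' = k(X_1(N^2))^H$, the fundamental theorem of Galois theory tells us that $k'/k(X_0(N))$ is Galois if and only if $H$ is a normal subgroup of $G$. So the whole problem collapses to showing that $H = (\zNz \times \{0\}) \rtimes_{\widehat{\varphi}} (\zNz)^\times$ is \emph{not} normal in $G$.

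Next, I would check normality by a direct computation of a conjugate. Using the semidirect product formulas recalled in Section \ref{sec:SemidirectProduct}, I would pick a generic $g = ((g_1,g_2), k_1) \in G$ and a generic $h = ((h_1,0), k_2) \in H$ and expand $ghg^{-1}$. The computation reduces, after the cancellations allowed by $\widehat{\varphi}_{k_1}(h_1,0) = (k_1 h_1, 0)$ and $\widehat{\varphi}_{k_1^{-1}}(-g_1,-g_2) = (-k_1^{-1}g_1, -k_1^{-1}g_2)$, to an element of the form $((*, g_2 - k_2 g_2), k_2)$. For this to lie in $H$ for every choice of $g,h$ it would be necessary that $g_2 = k_2 g_2$ in $\zNz$ for all $g_2 \in \zNz$ and all $k_2 \in (\zNz)^\times$.

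Finally, I would exhibit a concrete obstruction: take $g_2 = 1$ and any $k_2 \neq 1$ in $(\zNz)^\times$, which exists precisely because $N \geq 5$ (so $(\zNz)^\times$ is nontrivial beyond $\{\pm 1\}$ considerations; even $k_2 = -1$ suffices as long as $-1 \neq 1$ in $\zNz$, which holds for $N \geq 3$). Then $k_2 g_2 = k_2 \neq 1 = g_2$, so $ghg^{-1} \notin H$, hence $H$ is not normal and the extension is not Galois. The main potential obstacle is purely bookkeeping: keeping track of the order of operations in the semidirect product and making sure the $\widehat{\varphi}$-twist is applied consistently when computing both $g^{-1}$ and the product $ghg^{-1}$; once that is done cleanly, the non-normality is transparent from the second coordinate of the first factor.
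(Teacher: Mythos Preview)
Your proposal is correct and follows essentially the same approach as the paper: reduce via the Galois correspondence (using Lemma~\ref{lemma:MainLemma}) to the question of whether $H$ is normal in $G$, then disprove normality by the same explicit conjugation computation, landing on the condition $g_2 - k_2 g_2 = 0$ and refuting it with $g_2 = 1$, $k_2 \neq 1$. The paper additionally records some context about the intermediate congruence subgroups $\tildeG(N) \subset \G'$, but this is not needed for the bare statement of the theorem, so your streamlined version is complete as written.
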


Returning to Example \ref{ex:OpenProblem}, the existence of radical isogeny formulas on $S_0(5)$ depends on finding a parametrization of $S_0(5)$ for which the extension $\Q(\zeta_5)(\beta')\slash\Q(\zeta_5)(\beta)$ is Galois. However, Theorem \ref{thm:MainTheorem} proves that a Galois extension is not possible in a more generalized setting of modular curves. As a direct consequence of that fact, we have the following corollary which is the main result of this article.

\begin{corollary}\label{cor:OpenProblem}
Let $N \geq 5.$ Radical isogeny formulas on $S_0(N)$ are not possible.
\end{corollary}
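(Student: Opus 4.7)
The plan is to deduce the corollary directly from Theorem \ref{thm:MainTheorem} by translating the hypothetical existence of a radical isogeny formula on $S_0(N)$ into a Galois-theoretic statement that contradicts the theorem. Concretely, a radical isogeny formula on $S_0(N)$ would produce a function $\alpha\in k(X_0(N))$, expressed in terms of the parameter $\boldsymbol{\beta}$, such that after adjoining an $N$-th root $\sqrt[N]{\alpha}$ one obtains (a rational expression for) $\beta'$. In field-theoretic language, this amounts to the equality
\begin{equation*}
k(X(\G'))=k(X_0(N))(\sqrt[N]{\alpha}),
\end{equation*}
where, as in Theorem \ref{thm:MainTheorem}, $k(X(\G'))=k(X_1(N^2))^H$ is the smallest intermediate field between $k(X_0(N))$ and $k(X_1(N^2))$ that contains $\beta'$, and $[\G_0(N):\G']=N$ matches the degree of the radical extension.

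First I would record that $\beta'$ indeed generates $k(X(\G'))$ over $k(X_0(N))$: it lies in $k(X(\G'))$ by the invariance under $H$ shown in Lemma \ref{lemma:MainLemma}, and conversely $\G'$ is the precise stabilizer cut out by the preimage analysis in the proof of that lemma, so no proper intermediate subfield of $k(X(\G'))/k(X_0(N))$ contains $\beta'$. Hence producing a radical formula for $\beta'$ in terms of $\boldsymbol{\beta}$ really does force the full field equality displayed above.

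Next I would invoke Kummer theory. Since $\operatorname{char}(k)\nmid N$, after (harmlessly) adjoining a primitive $N$-th root of unity $\zeta_N$ to the base field, an extension of the form $F(\sqrt[N]{\alpha})/F$ is cyclic Galois of degree dividing $N$. Applied to $F=k(X_0(N))(\zeta_N)$ this means the conjectural radical extension is Galois. Because Galois-ness is preserved under passing up to $k(\zeta_N)$-coefficients (roots of unity being independent of the modular variables), the original extension $k(X(\G'))/k(X_0(N))$ would have to be Galois as well. But Theorem \ref{thm:MainTheorem} asserts precisely the opposite: $H$ fails to be normal in $G$, so $k(X(\G'))/k(X_0(N))$ is not Galois. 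This contradiction rules out the existence of the formula.

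The only delicate point, and the step I would write most carefully, is the translation between "radical isogeny formula on $S_0(N)$" and the field equality $k(X(\G'))=k(X_0(N))(\sqrt[N]{\alpha})$: one must verify that in the setting of Section \ref{sec:RadicalToModular}, a formula expressing $\beta'$ as a rational expression in $\beta$ and a single $N$-th root (as in the Tate-normal-form case and in the $N=3,4$ Montgomery case of Onuki--Moriya) genuinely yields a simple radical field extension of exactly the degree predicted by the preimage count of Lemma \ref{lemma:MainLemma}. Once that identification is in place, Theorem \ref{thm:MainTheorem} supplies the contradiction and the corollary follows.
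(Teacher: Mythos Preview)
Your proposal is correct and follows essentially the same route as the paper: the corollary is stated there as a direct consequence of Theorem \ref{thm:MainTheorem}, and the translation ``radical isogeny formula on $S_0(N)$ $\Rightarrow$ $k(X(\G'))=k(X_0(N))(\sqrt[N]{\alpha})$ $\Rightarrow$ the extension is cyclic Galois'' is exactly the reasoning already embedded in the proof of that theorem. Your write-up is somewhat more careful (you invoke Kummer theory explicitly and note the role of $\zeta_N$, and you flag the identification of the field generated by $\beta'$ with $k(X(\G'))$), but no genuinely different idea is involved.
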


\begin{figure}
\begin{center}
\begin{tikzcd}
      & k(X_0(N)) \arrow[r, phantom, "_\subset^N" description] \arrow[rr, no head, bend right=30, "G"]
      & k(X(\G')) \arrow[r, phantom, "\subset" description] \arrow[r, no head, bend left=50, "H"]
      & k(X_1(N^2))                        
\end{tikzcd}
\caption{Function fields related to groups $G$ and $H$}
\label{fig:groupsX0Connections}
\end{center}
\end{figure}

\bibliographystyle{plain}
\bibliography{refs}
\end{document}